\newcommand{\de}{\mathrm{d}} 
\newcommand{\se}{\mathbb{S}}
\newcommand{\re}{\mathbb{R}}
\newtheorem{theorem}{Theorem}
\newtheorem{lemma}{Lemma}
\newtheorem{remark}{Remark}[section]
\newtheorem{definition}{Definition}
\title{Reflecting random flights}
\numberwithin{equation}{section}
\author{ Alessandro De Gregorio}
\author{Enzo Orsingher}
\address{Dipartimento di Scienze Statistiche, ``Sapienza" University of Rome,
P.le Aldo Moro, 5 - 00185, Rome, Italy}
\email{alessandro.degregorio@uniroma1.it}
\email{enzo.orsingher@uniroma1.it}
\date{\today}
\keywords{Bessel functions, circular inversion, Euler-Darboux-Poisson equations, fractional-type Poisson distributions, random motions at finite velocity, inversion in hyperplane.}
\begin{document}

\maketitle

\begin{abstract}

We consider random flights in $\re^d$ reflecting on the surface of a sphere $\se^{d-1}_R,$ with center at the origin and with radius $R,$ where reflection is performed by means of circular inversion. Random flights studied in this paper are motions where the orientation of the deviations are uniformly distributed on the unit-radius sphere $\se^{d-1}_1$.

We obtain the explicit probability distributions of the position of the moving particle when the number of changes of direction is fixed and equal to $n\geq 1$. We show that these distributions involve functions which are solutions of the Euler-Darboux-Poisson equation. The unconditional probability distributions of the reflecting random flights are obtained by suitably randomizing $n$ by means of a fractional-type Poisson process.

Random flights reflecting on hyperplanes according to the optical reflection form are considered and the related distributional properties derived.
\end{abstract}


\section{Introduction}

This paper is concerned with random flights, that is with random motions in the Euclidean space $\re^d,d\geq 2$, performed at finite velocity (sometimes also called random evolutions). These continuous-time non-Markovian random motions have sample paths formed by straight lines, turning through any angle whatever, i.e. with uniformly distributed directions on the unit-radius sphere.

These random flights are useful to describe concrete motions of particles in gases and for these reason have been investigated by many physicists and mathematicians over the years. The first ones who analyzed the random flights with a fixed number of deviations were K. Pearson and J.C. Kluyver and some years later, S. Chandrasekar wrote a long paper on this subject with applications to astronomy.

Recent applications to the analysis of photon propagation in the Cosmic Microwave Background (CMB) radiation have been discussed in Reimberg and Abramo (2013). Furthermore, Martens {\it et al}. (2012) have shown that the probability law of planar random motions coincides with the explicit form of the van Hove function for the run-and-tumble model in two dimensions. This work gives an interesting and strong link between explicit solutions of the Lorentz model of electron conduction and the probability theory of random flights. For other possible applications of the random flights models, see Hughes (1995).

The main object of our investigation is represented by the position $\{\underline{\bf X}_d(t),t>0\}$ reached after a fixed or a random number of deviations. The randomization of the number $\mathcal N(t)$ of steps and of the lengths of intermediate displacements has produced more flexible versions of random flights for which explicit distributions of  $\{\underline{\bf X}_d(t),t>0\}$ have been obtained.

Recently many papers have studied random motions with velocity $c>0,$ with uniformly distributed deviations at Poisson paced times in the plane and successively in the Euclidean space $\re ^d$ (see, for instance, Stadje, 1987, Masoliver {\it et al}., 1993, Kolesnik and Orsingher, 2005, for planar motions, Stadje, 1989, and Orsingher and De Gregorio, 2007, in $\re^d$). Franceschetti, (2007) established a relationship between the number of deviations $n$ and the dimension $d$ for which the conditional distributions $P\{\underline{\bf X}_d(t)\in \de \underline{\bf x}_d|\mathcal N(t)=n\}$ are uniform. The assumption that changes of direction are governed by a homogeneous Poisson process leads to explicit (conditional and unconditional) probability distributions of $\{\underline{\bf X}_d(t),t>0\}$ only for $d=2,4.$ The idea of assuming different forms for intermediate steps (displacements) has produced fruitful results in that the probability distribution of $\underline{\bf X}_d(t)$ can be explicitly produced for all spaces of dimension $d\geq 2.$

The random flight $(\underline\Theta,\underline\tau,\mathcal N(t)),$ is a triple where $\underline\Theta$ represents the ensemble of deviations during the time interval $[0,t]$, $\underline\tau$ is the vector of the lengths of the displacements for a fixed number $\mathcal N(t)$ number of changes of direction. These processes have been  extensively investigated in the case where $\underline\tau$ has a Dirichlet distribution and $\underline\Theta$ is spherically uniform. The papers by  Le Ca\"er (2010), (2011), De Gregorio and Orsingher (2012), De Gregorio (2014) and Letac and Piccioni (2014) are devoted to this case. Pogorui and Rodriguez-Dagnino (2011), (2013) have studied random flights where $\underline\tau$ has Erlang distribution, while Beghin and Orsingher (2010) obtained conditional distributions of $\{\underline{\bf X}_d(t),t>0\}$ where the steps $\underline\tau$ are Gamma random variables with parameter 2. 

The equations governing the unconditional probability distributions of the random flight $(\underline\Theta,\underline\tau,\mathcal N(t))$ have recently been obtained (see Garra and Orsingher, 2014). The case where $\underline\Theta$ has a specific non-uniform distribution has been studied by De Gregorio (2012). Motions on subspaces of $\re^d$ can be regarded as random flights with random velocities and are studied in De Gregorio and Orsingher (2012) and Pogorui and Rodriguez-Dagnino (2012). Asymptotic results for the position of these type of random walks have been obtained by Ghosh {\it et al}. (2014).

We now describe in detail the structure of the random flights. Let us consider a particle or a walker which starts from the origin of $\mathbb{R}^d,d\geq 2$, and performs its motion with a constant velocity $c>0$. We indicate by $0=t_0<t_1<t_2<...<t_k<...$ the random instants at which the random walker changes direction and denote the length of time separating these instants by $\tau_k=t_k-t_{k-1}$, $k\geq 1$. Let $\mathcal N(t)=\sup\{k\geq 1:t_k\leq t\}$ be the (random) number of times in which the random motion changes direction during the interval $[0,t]$. If, at time $t>0$, $ \mathcal N(t)=n$, with $n\geq 1$, the random motion has performed $n+1$ displacements. We observe that ${\bf \underline\tau}_n=(\tau_1,...,\tau_n)\in S_n$, where $S_n$ represents the open simplex
$$ S_n=\left\{(\tau_1,...,\tau_n)\in\mathbb R^n: 0<\tau_k<t-\sum_{j=0}^{k-1}\tau_j, k=1,2,...,n\right\},$$ with $\tau_0=0$ and $\tau_{n+1}=t-\sum_{j=1}^n\tau_j$.

By $\underline\Theta_{n+1}=(\underline\theta_{d-1}^1,...,\underline\theta_{d-1}^k,...,\underline\theta_{d-1}^{n+1})$ we denote the vector of independent deviations performed at times $t_k,k=0,1,...,n.$ The $k$-th random variable $\underline\theta_{d-1}^k$ is a $(d-1)$-dimensional random variable with components $(\theta_{1}^k,...,\theta_{d-2}^k,\phi^k)$ distributed uniformly on the unit-radius sphere $\mathbb{S}_{1 }^{d-1}=\{\underline{\bf x}_d\in \mathbb R^d:||\underline{\bf x}_d||=1\}$. This means that the probability density function of $\underline{\theta}_{d-1}^k$ is equal to
\begin{equation}\label{eq:jointdis1}
\varphi(\underline{\theta}_{d-1}^k)=\frac{\Gamma(\frac d2)}{2\pi^{\frac d2}}\sin^{d-2}\theta_1^k\sin^{d-3}\theta_2^k\cdots \sin\theta_{d-2}^k,
\end{equation}
where $\theta_j^k\in[0,\pi],j\in\{1,...,d-2\},\,\phi^k\in[0,2\pi]$. Furthermore, $\tau_k$ and $\theta_j^k$ are independent for each $k$.

Let us denote by $\{\underline{\bf X}_d(t),t>0\}$ the process representing the position reached, at time $t>0$, by the particle moving randomly according to the rules described above. The position $\underline{\bf X}_d(t)=(X_1(t),...,X_d(t)),$ is the main object of interest of the random flight and can be written, for $\mathcal N(t)=n,$ as
\begin{equation}\label{eq:definition}
\underline{\bf X}_d(t)=c\sum_{k=1}^{n+1}{\bf \underline{V}}_d^k\tau_k,
\end{equation}
where ${\bf\underline{V}}_{d}^k,k=1,2,...,n+1,$ are independent $d$-dimensional random vectors defined as follows
$${\bf\underline{V}}_d^k=\left(
 \begin{array}{l}

               \sin\theta_1^k\sin\theta_2^k\cdot\cdot\cdot\sin\theta_{d-2}^k\sin\phi^{k} \\
       \sin\theta_1^k\sin\theta_2^k\cdot\cdot\cdot\sin\theta_{d-2}^k\cos\phi^{k} \\
      ...\\
      \sin\theta_1^k\cos\theta_2^k\\
       \cos\theta_1^k
   \end{array}    
    \right)$$
    and $(\theta_1^k,\theta_2^k,...,\theta_{d-2}^k,\phi^{k})$ are independent and identically distributed with density \eqref{eq:jointdis1}.

In this work we analyze reflecting random walks defined by means of inversive geometry. In particular,
this paper studies random flights reflecting on the $d$-dimensional sphere $\se_R^{d-1}=\{\underline{\bf x}_d:||\underline{\bf x}_d||=R\}$ with radius $R.$ The reflected processes are constructed by suitably manipulating the sample paths of the free random flight $\{\underline{\bf X}_d(t),t>0\}$. We assume that the part of the trajectories of the free random flight are reflected by inversion with respect to the sphere $\se_R^{d-1}$. This produces substantial changes in their form, because the segments of outside lying sample paths are converted into arcs of circle inside $\se_R^{d-1}$. The picture of the reflecting processes is therefore made up by straight lines (for sample paths which never crossed  $\se_R^{d-1}$) and circular arcs (for the sample paths which performed excursions outside $\se_R^{d-1}$). The same approach is used by Aryasova {\it et al}. (2013) in the study of reflecting diffusion processes.

The circular inversion permits us to write down the probability distribution $p_n^*(\underline{{\bf x}}_d,t)$ of the reflected process $\{\underline{{\bf X}}_d^*(t),t>0\}$ by exploiting the density function $p_n(\underline{{\bf x}}_d,t)$ of the free random flight $\{\underline{{\bf X}}_d(t),t>0\}.$ We show that
\begin{align}\label{eq:condensintrod}
p_{n}^*(\underline{{\bf x}}_d,t)&=\begin{cases}
p_n(\underline{{\bf x}}_d,t){\bf 1}_{B_R^d}(\underline{{\bf x}}_d),& t\leq\frac{R}{c}\\
p_n(\underline{{\bf x}}_d,t){\bf 1}_{B_R^d}(\underline{{\bf x}}_d)+\frac{R^{2d}}{||\underline{{\bf x}}_d||^{2d}}p_n\left(R^2\frac{\underline{{\bf x}}_d}{||\underline{{\bf x}}_d||^2},t\right){\bf 1}_{C^d_{\frac{R^2}{ct},R}}(\underline{{\bf x}}_d),& t>\frac{R}{c},
\end{cases}
\end{align}
where $B_R^d$ is the ball in the space $\re^d$ with radius $R$, $C^d_{\frac{R^2}{ct},R}=\{\underline{{\bf x}}_d\in\mathbb R^d:\frac{R^2}{ct}<  ||\underline{{\bf x}}_d||\leq R\}$ and ${\bf 1}_A(x)$ is the indicator function for the set $A$.  
For $t>\frac Rc,$ formula \eqref{eq:condensintrod} registers the contribution of excursions outside $\se_R^{d-1}$ which are brought inside the sphere $\se_R^{d-1}$ by circular inversion. If $p_n(\underline{{\bf x}}_d,t)$ refers to random flights with Dirichlet distributed displacements we have that (see formulas (2.10) and (2.11) in De Gregorio and Orsingher, 2012)
\begin{equation}\label{eq:condensdir}
p_n(\underline{{\bf x}}_d,t)=\frac{\Gamma(\frac{n+1}{2}(d-h)+\frac h2)}{\Gamma(\frac n2(d-h))}(c^2t^2-||\underline{{\bf x}}_d||^2)^{\frac n2(d-h)-1},\quad ||\underline{{\bf x}}_d||<ct,h=1,2.
\end{equation}

The space-dependent factor appearing in \eqref{eq:condensdir} is a function satisfying the Euler-Poisson-Darboux (EPD) equation. In other words, the function
$$f_\beta(\underline{{\bf x}}_d,t):=(c^2t^2-||\underline{{\bf x}}_d||^2)^\beta,\quad \beta\in\re,||\underline{{\bf x}}_d||<ct,$$
satisfies the EPD equation
\begin{equation}
\frac{\partial ^2u}{\partial t^2}=c^2\Delta u+\frac{2\beta-1+d}{t}\frac{\partial u}{\partial t},
\end{equation}
where $\Delta:=\sum_{k=1}^d\frac{\partial^2}{\partial x_k^2}$, which becomes the $d$-dimensional wave equation for $\beta=\frac{d-1}{2}$. In the reflected motions considered here we must examine the functions
$$\bar f_\beta(\underline{{\bf x}}_d,t):=\left(c^2t^2-\frac{R^4}{||\underline{{\bf x}}_d||^2}\right)^\beta,\quad \beta\in\re,\frac{R^2}{ct}<||\underline{{\bf x}}_d||<R,$$
which also are solutions of more complicated Euler-Poisson-Darboux equations where space-varying coefficients appear.

The final section of the paper is concerned with random flights reflecting when colliding with hyperplanes. In this case reflection is intended in the sense that striking and reflecting paths form the same angle with respect to the normal to the hyperplane $H(\underline a_d,b)=\{\underline{{\bf x}}_d\in\re ^d:\, \langle\underline a_d,\underline{{\bf x}}_d\rangle=b;\,\underline a_d\in\re^d,b\in\re\}$. In this sense reflecting random flights behave as light rays of optics when the reflecting surface is a hyperplane. The trajectories of the reflecting random motions can be obtained form those of the free random flights by means of the bijective operator $\nu:\re^d\to \re^d$ defined as
 \begin{equation}
\nu(\underline{{\bf x}}_d):=\underline{{\bf x}}_d+2\frac{ b-\langle\underline a_d,\underline{{\bf x}}_d\rangle}{\langle\underline a_d,\underline a_d\rangle}\underline a_d,
\end{equation}
which represents the reflection on the hyperplane $H(\underline a_d,b)$. For $t>t':=\inf(t:H(\underline a_d,b)\cap  B_{ct}^d\neq \varnothing ),$ we therefore have that the density $p_n'(\underline{{\bf x}}_d,t)$ of the reflected random motion $\{\underline{\bf X}_d'(t),t>0\}$ reads

\begin{align}
p_{n}'(\underline{{\bf x}}_d,t)
&=\begin{cases}
p_n(\underline{{\bf x}}_d,t){\bf 1}_{B_{ct}^d}(\underline{{\bf x}}_d),& t\leq t'\\
p_n(\underline{{\bf x}}_d,t){\bf 1}_{L_{ct}^d}(\underline{{\bf x}}_d)+p_n\left(\nu(\underline{{\bf x}}_d),t\right){\bf 1}_{V_{ct}^d}(\underline{{\bf x}}_d),& t>t',
\end{cases}
\end{align}
where $L_{ct}^d:=L_{ct}^d(\underline a_d, b):=\{\underline{{\bf x}}_d\in \mathbb{R}^d:||\underline{{\bf x}}_d||^2< c^2t^2,\langle\underline a_d,\underline{{\bf x}}_d\rangle<b\}$ and $V_{ct}^d$ consists of points of $B_{ct}^d$ reflected by the operator $\nu$ (see Figure \ref{fig2}).

The main concepts of the inversive geometry represent fundamental tools for our analysis. Therefore, for the convenience of the reader, we recall some basic definitions in the Appendices \ref{refspheres}-\ref{refhyper}.
\section{Notations}

We list the main symbols used in this paper.
\begin{itemize}
\item With $||\cdot||$ and $\langle\cdot,\cdot\rangle$ we indicate the Euclidean norm and the scalar product, respectively.
\item  Let $\se_R^{d-1}(\underline{{\bf x}}_d^0):=\{\underline{{\bf x}}_d\in\mathbb R^d:||\underline{{\bf x}}_d-\underline{{\bf x}}_d^0||= R\}$ be the sphere with radius $R>0$ and center at $\underline{{\bf x}}_d^0\in\re ^d$. We set $\se_R^{d-1}(O):=\se_R^{d-1}$ where $O$ is the origin of $\re^d$ . Furthermore, the surface area of $\mathbb{S}_{1 }^{d-1}$ is given by
$\text{area}(\mathbb{S}_{1 }^{d-1})=\frac{2\pi^{\frac d2}}{\Gamma(\frac d2)}.$ 
\item Let $B_R^d:=\{\underline{{\bf x}}_d\in\mathbb R^d:||\underline{{\bf x}}_d||< R\}$ be the ball with center $O$ and radius $R$.  Let $C_{R_1,R_2}^d:=\{\underline{{\bf x}}_d\in\mathbb R^d:R_1< ||\underline{{\bf x}}_d||\leq R_2\}$.

\item Let $H(\underline a_d,b):=\{\underline{{\bf x}}_d\in\re ^d:\, \langle\underline a_d,\underline{{\bf x}}_d\rangle=b;\,\underline a_d\in\re^d,b\in\re\}$ be a hyperplane in $\re^d$.
\item Let ${\bf 1}_A(x)$ be the indicator function, that is  
$${\bf 1}_A(x)=\begin{cases}1,& x\in A,\\
0,& 	\text{otherwise},
\end{cases}$$
while $$J_\nu(x)=\sum_{k=0}^\infty(-1)^k\left(\frac{x}{2}\right)^{2k+\nu}\frac{1}{k!\Gamma(k+\nu+1)},\quad x,\nu\in\mathbb{R},$$
is the Bessel function of order $\nu$.
\item Let $n$ be the fixed number of changes of direction in the time interval $[0,t]$, we indicate the conditional probability areaure by $$P_n\{\cdot\in A\}:=P\{\cdot\in A|\mathcal N(t)=n\},$$
for all Borel sets $A$, with $n\geq 1$. Furthermore, let $E_n\{\cdot\}:=E\{\cdot|\mathcal N(t)=n\}$.

\item Let 
$$p_n(\underline{{\bf x}}_d,t):=P_n\{\underline{{\bf X}}_d(t)\in \de\underline{{\bf x}}_d\}/\prod_{k=1}^d\de x_k,$$
$$p_n^*(\underline{{\bf x}}_d,t):=P_n\{\underline{{\bf X}}_d^*(t)\in \de\underline{{\bf x}}_d\}/\prod_{k=1}^d\de x_k,$$
$$p_n'(\underline{{\bf x}}_d,t):=P_n\{\underline{{\bf X}}_d'(t)\in \de\underline{{\bf x}}_d\}/\prod_{k=1}^d\de x_k.$$
\end{itemize}

\section{Preliminary results on random flights}\label{sec:prelim}

Let us indicate by $g({\bf \underline\tau}_n; t)$ the probability density function of the random vector ${\bf \underline\tau}_n.$
We provide the probability distribution of $\{\underline{{\bf X}}_d(t),t>0\}$ when the number of steps performed by the motion in $[0,t]$ is fixed.

\begin{lemma} \label{lemma1}
Let $n\geq 1$ be the number of changes of direction happening during time interval $[0,t]$. The conditional density function of $\{\underline{{\bf X}}_d(t),t>0\}$ is equal to
\begin{align}\label{eq:densfree}
p_n(\underline{{\bf x}}_d,t)=\frac{\left\{2^{\frac d2-1}\Gamma\left(\frac d2\right)\right\}^{n+1}}{(2\pi)^{\frac d2}||\underline{\bf x}_d||^{\frac d2-1}}\int_0^{\infty}\left[\int_{S_n}g({\bf \underline\tau}_n; t)\prod_{k=1}^{n+1}\left\{\frac{J_{\frac d2-1}(c\tau_k\rho)}{(c\tau_k\rho)^{\frac d2-1}} \right\}\prod_{k=1}^{n}\de\tau_k\right] \rho^{\frac d2}J_{\frac d2-1}(\rho ||\underline{\bf x}_d||)
\de\rho,
\end{align}
with $ ||\underline{\bf x}_d||<ct$.
\end{lemma}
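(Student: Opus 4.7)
The plan is to establish the density by conditioning on the time vector $\underline{\tau}_n$, computing the conditional characteristic function, and then inverting it using the fact that it is a radial function.

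First, I would condition on $\underline{\tau}_n=(\tau_1,\ldots,\tau_n)$, with $\tau_{n+1}=t-\sum_{k=1}^n\tau_k$ determined. Given $\underline{\tau}_n$, formula \eqref{eq:definition} expresses $\underline{\bf X}_d(t)$ as a sum of independent vectors $\underline{\bf Y}_k := c\tau_k\underline{\bf V}_d^k$, each one uniformly distributed on the sphere $\se^{d-1}_{c\tau_k}$ since $\underline{\bf V}_d^k$ is uniform on $\se^{d-1}_1$ by \eqref{eq:jointdis1}. The conditional characteristic function therefore factorises, and I would recall the classical identity (obtained by integrating $e^{i\langle\underline{\alpha},\underline{\bf u}\rangle}$ over $\se^{d-1}_r$ in spherical coordinates)
\begin{equation*}
E\bigl[e^{i\langle\underline{\alpha},\underline{\bf Y}_k\rangle}\bigr]=\frac{2^{\frac d2-1}\Gamma(\tfrac d2)\,J_{\frac d2-1}(c\tau_k\|\underline{\alpha}\|)}{(c\tau_k\|\underline{\alpha}\|)^{\frac d2-1}}.
\end{equation*}
Multiplying over $k=1,\ldots,n+1$ yields the conditional characteristic function $\hat p_n(\underline{\alpha},t\mid\underline{\tau}_n)$, which depends on $\underline{\alpha}$ only through $\rho:=\|\underline{\alpha}\|$.

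Second, I would invert this radial characteristic function. By radial symmetry, passing the $d$-dimensional Fourier inversion formula to spherical coordinates and integrating out the angular variables on $\se^{d-1}_1$ (which produces another factor $(2\pi)^{d/2}\|\underline{\bf x}_d\|^{1-d/2}J_{\frac d2-1}(\rho\|\underline{\bf x}_d\|)$ by the same spherical-integral identity) reduces the inversion to the Hankel-type formula
\begin{equation*}
p_n(\underline{\bf x}_d,t\mid\underline{\tau}_n)=\frac{1}{(2\pi)^{\frac d2}\|\underline{\bf x}_d\|^{\frac d2-1}}\int_0^\infty \hat p_n(\rho,t\mid\underline{\tau}_n)\,\rho^{\frac d2}J_{\frac d2-1}(\rho\|\underline{\bf x}_d\|)\,\de\rho.
\end{equation*}
Substituting the product form of $\hat p_n$ produces exactly the inner $\rho$-integrand of \eqref{eq:densfree}, with the $(n+1)$-fold product of Bessel factors and the constant $\{2^{d/2-1}\Gamma(d/2)\}^{n+1}$ in front.

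Finally, I would recover the unconditional density (with respect to $\underline{\tau}_n$) by integrating against $g(\underline{\tau}_n;t)$ over the simplex $S_n$ and interchanging the order of integration by Fubini, which gives formula \eqref{eq:densfree}.

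The routine part is the characteristic-function computation; the subtle point is the justification of the Fubini interchange between the $\rho$-integral (with its oscillatory, non-absolutely convergent Bessel factor $\rho^{d/2}J_{d/2-1}(\rho\|\underline{\bf x}_d\|)$) and the simplex integral over $\underline{\tau}_n$. I expect this to be the main technical obstacle, and the standard remedy is to work in the sense of tempered distributions, or equivalently to insert a convergence factor $e^{-\epsilon\rho^2}$, carry out the interchange, and let $\epsilon\downarrow 0$ using the fact that the conditional density is already known to exist and lie in $L^1(B^d_{ct})$ because $\|\underline{\bf X}_d(t)\|<ct$ almost surely.
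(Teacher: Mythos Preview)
Your proposal is correct and follows essentially the same route as the paper: compute the characteristic function via the spherical-integral identity $\int_{\se^{d-1}_1}e^{i\langle\alpha,u\rangle}\,\de u=(2\pi)^{d/2}J_{d/2-1}(\|\alpha\|)/\|\alpha\|^{d/2-1}$, obtain the product of Bessel factors, and invert by passing to spherical coordinates and applying the same identity once more. The only cosmetic difference is ordering---the paper integrates over $S_n$ first and then inverts, while you condition on $\underline{\tau}_n$, invert, and then integrate---and your explicit flagging of the Fubini/oscillatory-integral issue is a point the paper's sketch simply cites away to earlier references.
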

\begin{proof}
The expression \eqref{eq:densfree} has been obtained by Orsingher and De Gregorio (2007) (formula (2.13)) in the uniform case and by De Gregorio (2014) (formula (3.1)) in the case of generalized Dirichlet distributions. Similar steps can be used in a general framework for the distribution $g({\bf \underline\tau}_n; t)$. In what follows, we will provide a sketch of the proof.

Let us start the proof by showing that the characteristic function of $\underline{\bf X}_d(t)$ is equal to 
\begin{align}\label{eq:cf}
\mathcal F_n(\underline{\bf \alpha}_d)
&:=E_n\left\{e^{i\langle\underline{\bf \alpha}_d,\underline{{\bf X}}_d(t)\rangle}\right\}\notag\\
&=\left\{2^{\frac d2-1}\Gamma\left(\frac d2\right)\right\}^{n+1}\int_{S_n}g({\bf \underline\tau}_n; t)\prod_{k=1}^{n+1}\left\{\frac{J_{\frac d2-1}(c\tau_k||\underline{\alpha}_d||)}{(c\tau_k||\underline{\alpha}_d||)^{\frac d2-1}} \right\}\prod_{k=1}^{n}\de\tau_k.
\end{align}
 We can write that
\begin{align*}
\mathcal F_n(\underline{\bf \alpha}_d)&=\int_{S_n}g({\bf \underline\tau}_n; t)\mathcal{I}_n(\underline{\alpha}_d;{\bf \underline\tau}_d)\prod_{k=1}^n\de\tau_k
\end{align*}
where
\begin{align}\label{eq:I_n}
\mathcal{I}_n(\underline{\alpha}_d;{\bf \underline\tau}_d)&:= \prod_{k=1}^{n+1}\left\{\int_{\Lambda}\exp\left\{ic\tau_k<\underline{\alpha}_d,{\bf \underline{V}}_k> \right\}\varphi(\underline{\theta}_{d-1}^k)\prod_{j=1}^{d-2}\de\theta_
j^k\de\phi^k\right\}
\end{align}
where $\Lambda:=[0,\pi]^n\times[0,2\pi]$.

It is well-known that the integral $\mathcal{I}_n(\underline{\alpha}_d;{\bf \underline\tau}_d)$ (see Theorem 2.1 of Orsingher and De Gregorio, 2007, and formula (2.5) of De Gregorio and Orsingher, 2012) is equal to
\begin{equation}\label{intangle}
\mathcal{I}_n(\underline{\alpha}_d;{\bf \underline\tau}_d)=\left\{2^{\frac d2-1}\Gamma\left(\frac d2\right)\right\}^{n+1}\prod_{k=1}^{n+1}\frac{J_{\frac d2-1}(c\tau_k||\underline{\alpha}_d||)}{(c\tau_k||\underline{\alpha}_d||)^{\frac d2-1}}
\end{equation}
and this leads to \eqref{eq:cf}.

Now, by inverting the characteristic function \eqref{eq:cf}, we are able to show that the 
density of the process $\{\underline{\bf X}_d(t),t>0\},$ is given by \eqref{eq:densfree}. Let us denote by $\underline{v}_d$ the vector
$$\underline{v}_d=\left(
 \begin{array}{l} 
   \sin\theta_{1}\sin\theta_{2}\cdot\cdot\cdot\sin\theta_{d-2}\sin\phi \\
          \sin\theta_{1}\sin\theta_{2}\cdot\cdot\cdot\sin\theta_{d-2}\cos\phi \\
      ...\\     
      \sin\theta_{1}\cos\theta_{2}\\
        \cos\theta_{1}

   \end{array}    
    \right).$$
Therefore, by inverting the characteristic function \eqref{eq:cf} and by passing to $d$-dimensional spherical coordinates, we have, for $\underline{{\bf x}}_d\in B_{ct}^d$, that
\begin{align*}
&p_n(\underline{\bf x}_d,t)\\
&=\frac{1}{(2\pi)^d}\int_{\mathbb{R}^d}e^{-i<\underline{\alpha}_d,\underline{\bf x}_d>}\mathcal F_n(\underline{\bf \alpha}_d)\prod_{k=1}^d\de\alpha_k\notag\\
&=\frac{1}{(2\pi)^d}\int_0^\infty \rho^{d-1}\de\rho\int_{\Lambda} e^{-i\rho<\underline{v}_d,\underline{\bf x}_d>}\de\mathbb {S}_1^{d-1}\left\{2^{\frac d2-1}\Gamma\left(\frac d2\right)\right\}^{n+1}\int_{S_n}g({\bf \underline\tau}_n; t)\prod_{k=1}^{n+1}\left\{\frac{J_{\frac d2-1}(c\tau_k\rho)}{(c\tau_k\rho)^{\frac d2-1}} \right\}\prod_{k=1}^{n}\de\tau_k.
\end{align*}
where $\de\mathbb{S}_{1 }^{d-1}:=\prod_{i=1}^{d-2}\{\sin^{d-i-1}\theta_i\de\theta_i\}\de\phi.$ 
By applying formula (2.15) of Orsingher and De Gregorio (2007) 
\begin{equation}\label{eq: (2.12) DGO12}
\int_{\Lambda} e^{-i\rho<\underline{v}_d,\underline{\bf x}_d>}\de\mathbb {S}_1^{d-1}=(2\pi)^{\frac d2}\frac{J_{\frac d2-1}(\rho ||\underline{\bf x}_d||)}{(\rho ||\underline{\bf x}_d||)^{\frac d2-1}},
\end{equation}
we arrive at the claimed result.

\end{proof}

In the analysis of random flights, a central role is played by the probabilistic assumptions on the displacements $c{\bf \underline\tau}_n$. It is useful to assume that  the random vector ${\bf \underline\tau}_n$ has the following Dirichlet distribution    
\begin{equation}\label{eq:dirichlet}
g({\bf \underline\tau}_n;\underline{\bf q}_{n+1},t)=\frac{\Gamma\left((n+1)\left(\frac dh-1\right)\right)}{(\Gamma\left(\frac dh-1\right))^{n+1}}\frac{1}{t^{(n+1)\left(\frac dh-1\right)}}\prod_{k=1}^{n+1}\tau_k^{\frac dh-2},
\end{equation}
where ${\bf \underline\tau}_n\in S_n$, with parameters $\underline{\bf q}_{n+1}:=(\frac dh-1,...,\frac dh-1),h:=1,2,$ and the conditions $d\geq 2$ if $h=1$ and $d\geq 3$ if $h=2$ hold. Under the assumptions \eqref{eq:dirichlet}, the density function \eqref{eq:densfree} can be evaluated explicitly   
\begin{equation}\label{eq:conddensfree}
p_{n,h}(\underline{{\bf x}}_d,t)=\frac{\Gamma(\frac{n+1}{2}(d-h)+\frac h2)}{\Gamma(\frac n2(d-h))}\frac{(c^2t^2-||\underline{{\bf x}}_d||)^{\frac n2(d-h)-1}}{\pi^{\frac d2}(ct)^{(n+1)(d-h)+h-2}}{\bf 1}_{B_R^d}(\underline{{\bf x}}_d),
\end{equation}
while for the process $\{D_d(t)=||\underline{\bf X}_d(t)||,t>0\},$ we are able to obtain the following conditional density
\begin{equation}\label{eq:densradialfree}
q_{n,d,h}(r,t)=\frac{2\Gamma(\frac{n+1}{2}(d-h)+\frac h2)}{\Gamma(\frac d2)\Gamma(\frac n2(d-h))}\frac{r^{d-1}(c^2t^2-r^2)^{\frac n2(d-h)-1}}{(ct)^{(n+1)(d-h)+h-2}}{\bf 1}_{(0,R)}(r),
\end{equation}
(see De Gregorio and Orsingher (2012) and Le Ca\"er (2010)). We observe that $\{D_d(t),t>0\},$ is related to the Beta distribution. Indeed, $D_d(t)/c^2t^2$ is distributed as a Beta random variable with parameters $\frac d2$ and $\frac n2(d-h)$. 

\section{Reflecting random flights in spheres}
\subsection{Definition and probability distributions}

Let us consider a random flight  $\{\underline{{\bf X}}_d(t),t>0\},$ defined as in the Introduction. When a sample path of $\underline{{\bf X}}_d(t)$ strikes the sphere $\se^{d-1}_R$, the trajectory of the process is reflected inside $\se^{d-1}_R$. The reflection of the random flight on the boundary $\se_R^{d-1}$ can be envisaged in different ways. The specular reflection (the incoming sample path forms the same angle as the reflected trajectory with respect to the normal vector to $\se_R^{d-1}$) seems the most natural one. Nevertheless, from the mathematical point of view, the reflecting surface is closed and this implies that the probability distribution of the reflected process takes a cumbersome form for sufficiently large values of $t$.  For this reason, we assume that the reflection is based on the principle of circular inversion in spheres defined in Appendix \ref{refspheres}. The most important effect of this procedure is that the sample paths obtained by reflection are deformed and take the structure of circumference arcs. This leads to a new process, namely, the reflecting random flight moving in $B^{d}_R\cup \se^{d-1}_R$.

\begin{definition}\label{def:refsp}
The reflecting random flight $\{\underline{{\bf X}}_d^*(t),t>0\},$ reflected on the sphere $\se_R^{d-1}$ is constructed by means of the free process $\{\underline{{\bf X}}_d(t),t>0\}$ as follows: 1) if $t\leq \frac Rc,$ then $\underline{{\bf X}}_d^*(t)
:=\underline{{\bf X}}_d(t)$; 2) if $t>\frac Rc,$ then if at least one change of direction happens during the time interval $[0,t]$, we have that
\begin{align}\label{eq:def}
\underline{{\bf X}}_d^*(t)
:=\underline{{\bf X}}_d(t){\bf 1}_{B_R^d}(\underline{{\bf X}}_d(t))+\mu_R(\underline{{\bf X}}_d(t))
{\bf 1}_{C^d_{R,ct}}(\underline{{\bf X}}_d(t)),
\end{align}
where $\mu_R(\underline{{\bf x}}_d)=R^2\frac{\underline{{\bf x}}_d}{||\underline{{\bf x}}_d||^2}$ is the inversion map defined by \eqref{eq:appendixmap1}; while if there are no deviations
\begin{align}\label{eq:def2}
\underline{{\bf X}}_d^*(t)
:=\mu_R(\underline{{\bf X}}_d(t))
{\bf 1}_{\se^{d-1}_{ct}}(\underline{{\bf X}}_d(t)).
\end{align}
\end{definition}

Definition \ref{def:refsp} leads to the following considerations on the sample paths of $\{\underline{{\bf X}}_d^*(t),t>0\}$:

\begin{itemize}
\item the reflecting random flight has two components: the first one is given by the free process $\{\underline{{\bf X}}_d(t),t>0\}$; the second component $\mu_R(\underline{{\bf X}}_d(t))$ is due to the reflection in $\se_R^{d-1}$ of the sample paths of $\{\underline{{\bf X}}_d(t),t>0\},$ wandering outside the sphere $\se_R^{d-1}$. It is worth mentioning that the property  P4) implies that the reflected paths have opposite orientation w.r.t. the trajectories of $\{\underline{{\bf X}}_d(t),t>0\}$ moving outside $\se_R^{d-1}$;
\item the reflecting component of $\{\underline{{\bf X}}_d^*(t),t>0\}$ is given by
\begin{align}\label{eq:refcomp}
\mu_R(\underline{{\bf X}}_d(t))=\frac{R^2}{||\underline{{\bf X}}_d(t)||^2}\underline{{\bf X}}_d(t)=\frac{cR^2}{||\underline{{\bf X}}_d(t)||^2}\sum_{k=1}^{n+1}{\bf \underline{V}}_k\tau_k,
\end{align}
for $\underline{{\bf X}}_d(t)\in C^d_{R,ct}$. Therefore, $\frac{cR^2}{||\underline{{\bf X}}_d(t)||^2}$ can be thought of as the random velocity of \eqref{eq:refcomp}. Since $\frac{cR^2}{||\underline{{\bf X}}_d(t)||^2}\leq c$, the reflected motion travels more slowly than the one related to the process $\{\underline{{\bf X}}_d(t),t>0\}$. For instance, see sample path ${\bf b}$ in Figure \ref{fig1};
\item the property P5) of the map $\mu_R$ implies that the sample paths of $\{\underline{{\bf X}}_d^*(t),t>0\}$ can be represented by broken lines, when no reflection has occurred, and by the composition of straight lines and circumference arcs, when at least one reflection has taken place (see sample path a in Figure \ref{fig1}). 
\end{itemize}

\begin{figure}[t]
\begin{center}
\includegraphics[angle=0,width=0.7\textwidth]{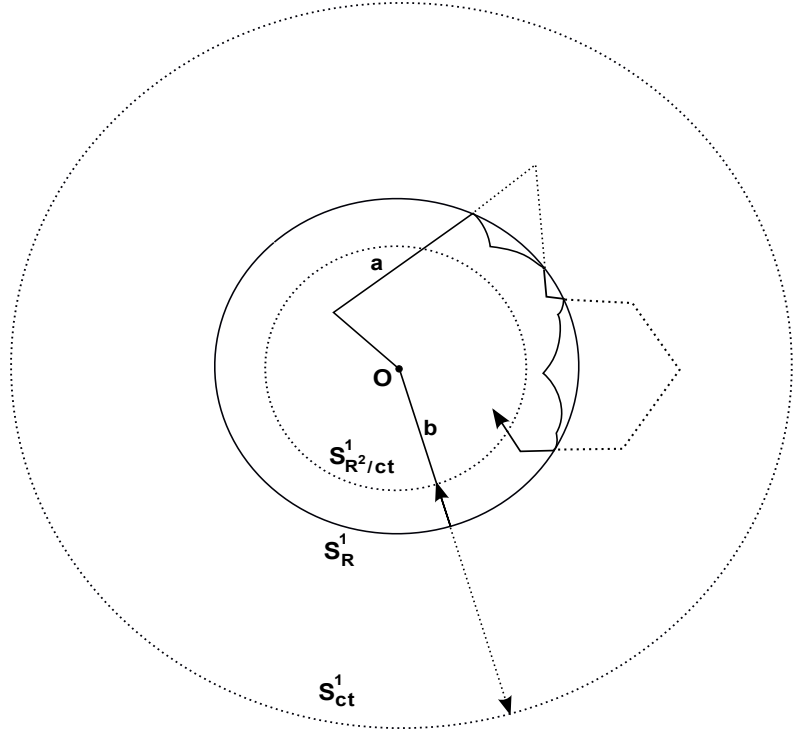}
\caption{Two typical sample paths are depicted. The trajectory ${\bf a}$ is composed of straight lines and arc of spheres, while the sample path ${\bf b}$ is obtained if no changes of direction happen in the interval $[0,t]$.}\label{fig1}
\end{center}
\end{figure}

 We are able to provide the conditional probability distributions $\{\underline{{\bf X}}_d^*(t),t>0\},$ by means of those of  $\{\underline{{\bf X}}_d(t),t>0\}$. Therefore, by means of \eqref{eq:densfree}, we are able to obtain the next result.
\begin{theorem}
If  $\mathcal N(t)=n$, with $n\geq 1,$ the process $\{\underline{{\bf X}}_d^*(t),t>0\},$ has the following conditional density function
\begin{align}\label{eq:condens}
p_{n}^*(\underline{{\bf x}}_d,t)=\begin{cases}
p_n(\underline{{\bf x}}_d,t){\bf 1}_{B_R^d}(\underline{{\bf x}}_d),& t\leq\frac{R}{c}\\
p_n(\underline{{\bf x}}_d,t){\bf 1}_{B_R^d}(\underline{{\bf x}}_d)+\frac{R^{2d}}{||\underline{{\bf x}}_d||^{2d}}p_n\left(R^2\frac{\underline{{\bf x}}_d}{||\underline{{\bf x}}_d||^2},t\right){\bf 1}_{C^d_{\frac{R^2}{ct},R}}(\underline{{\bf x}}_d),& t>\frac{R}{c}.
\end{cases}
\end{align}
where $p_n(\underline{{\bf x}}_d,t)$ is equal to \eqref{eq:densfree}.
\end{theorem}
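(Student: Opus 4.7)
The plan is to unfold the two cases of Definition \ref{def:refsp} separately. When $t\leq R/c$, the maximum radius $ct$ reached by the particle is at most $R$, so the sample paths never exit $B_R^d$, no reflection occurs, and by definition $\underline{{\bf X}}_d^*(t)=\underline{{\bf X}}_d(t)$; hence its density equals $p_n(\underline{{\bf x}}_d,t){\bf 1}_{B_R^d}(\underline{{\bf x}}_d)$, which gives the first line of \eqref{eq:condens}.

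For $t>R/c$, I would split the event $\{\underline{{\bf X}}_d^*(t)\in A\}$ on the support of the free process $\underline{{\bf X}}_d(t)$, using the decomposition $B_{ct}^d=B_R^d\sqcup C_{R,ct}^d$. On $\{\underline{{\bf X}}_d(t)\in B_R^d\}$ the reflected process coincides with the free one and contributes $p_n(\underline{{\bf x}}_d,t){\bf 1}_{B_R^d}(\underline{{\bf x}}_d)$ directly. On $\{\underline{{\bf X}}_d(t)\in C_{R,ct}^d\}$ the reflected process equals $\mu_R(\underline{{\bf X}}_d(t))$, and so its contribution on the image set $\mu_R(C_{R,ct}^d)=C_{R^2/(ct),R}^d$ is obtained by pushing the density of $\underline{{\bf X}}_d(t)$ forward through the inversion map.

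The key step is therefore the change-of-variables formula for $\mu_R$. Using its involutive property $\mu_R\circ\mu_R=\mathrm{id}$ (property P2 of Appendix \ref{refspheres}), the formula gives
\begin{equation*}
f_{\mu_R(\underline{{\bf X}}_d(t))}(\underline{{\bf x}}_d)=p_n\!\left(R^2\frac{\underline{{\bf x}}_d}{||\underline{{\bf x}}_d||^2},t\right)\bigl|\det J_{\mu_R}(\underline{{\bf x}}_d)\bigr|.
\end{equation*}
I would then compute the Jacobian directly: differentiating $y_i=R^2 x_i/||\underline{{\bf x}}_d||^2$ yields
\begin{equation*}
\frac{\partial y_i}{\partial x_j}=\frac{R^2}{||\underline{{\bf x}}_d||^2}\Bigl(\delta_{ij}-\frac{2x_ix_j}{||\underline{{\bf x}}_d||^2}\Bigr),
\end{equation*}
and since the bracketed matrix is a Householder reflection (determinant $-1$), the absolute value of the Jacobian determinant is $R^{2d}/||\underline{{\bf x}}_d||^{2d}$. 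This gives precisely the weight appearing in the second line of \eqref{eq:condens}.

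The main obstacle, and really the only non-trivial piece, is the Jacobian computation above; once that is in hand, summing the two disjoint contributions (the non-reflected part supported on $B_R^d$ and the reflected part supported on $C_{R^2/(ct),R}^d$) yields the stated formula. A brief comment should be added to justify that these two supports are disjoint (which holds since $R^2/(ct)<R$ when $t>R/c$) and that the mass balance is preserved, i.e.\ the pushforward mass of $p_n(\cdot,t){\bf 1}_{C_{R,ct}^d}$ under $\mu_R$ equals the integral of the second term of \eqref{eq:condens} over $C^d_{R^2/(ct),R}$, ensuring that $p_n^*(\underline{{\bf x}}_d,t)$ is indeed a probability density.
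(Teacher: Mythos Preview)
Your approach is essentially the same as the paper's: decompose according to whether the free process lies in $B_R^d$ or in $C_{R,ct}^d$, and push forward the latter piece through $\mu_R$ using the change-of-variables formula. The paper simply quotes $|\det J_{\mu_R}(\underline{{\bf x}}_d)|=R^{2d}/\|\underline{{\bf x}}_d\|^{2d}$ without derivation, so your explicit Householder computation is a welcome addition.

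Two small corrections. First, the involutive property $\mu_R\circ\mu_R=\mathrm{id}$ is property P3) in Appendix~\ref{refspheres}, not P2). Second, and more substantively, your final remark that the two supports $B_R^d$ and $C^d_{R^2/(ct),R}$ are disjoint is false: the annulus $C^d_{R^2/(ct),R}$ sits \emph{inside} the ball $B_R^d$, and the density $p_n^*$ is genuinely the sum of two overlapping contributions there. What \emph{is} disjoint is the decomposition $B_{ct}^d=B_R^d\sqcup C_{R,ct}^d$ in the domain of the free process $\underline{{\bf X}}_d(t)$; that is what justifies writing $P_n\{\underline{{\bf X}}_d^*(t)\in A\}$ as a sum of two probabilities, and the mass-balance check you mention follows immediately from it. Once you correct that sentence, the argument is complete and matches the paper's.
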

\begin{proof}
The case $ t\leq\frac{R}{c}$ is obvious. Now we assume that $t>\frac Rc$. Let $A$ be a Borel set such that $A\cap \overline{B_{R}^d}\neq \varnothing$. Let $\underline{{\bf Y}}_d(t):=\mu_R(\underline{{\bf X}}_d(t))=R^2\frac{\underline{{\bf X}}_d(t)}{||\underline{{\bf X}}_d(t)||^2},$ we get 
\begin{align}\label{eq:step1dens}
P_n\{\underline{{\bf X}}_d^*(t)\in A\}&=P_n\{\underline{{\bf X}}_d(t)\in A\cap B_{R}^d\}+P_n\left\{\underline{{\bf Y}}_d(t)\in A\cap  C_{\frac{R^2}{ct},R}^d\right\}\notag\\
&=\int_{ A\cap B_{R}^d}p_n(\underline{{\bf x}}_d,t)\prod_{k=1}^d\de x_k+P_n\left\{\underline{{\bf Y}}_d(t)\in A\cap C^d_{\frac{R^2}{ct},R}\right\}.
\end{align} 
The first term in the previous expression refers to the sample paths which do not attain the boundary $\se_R^{d-1}$ up to time $t,$ while the second probability concerns the trajectories reflecting on $\se_R^{d-1}$.
By setting $$D_A:=\mu_R^{-1}\left(A\cap  C_{\frac{R^2}{ct},R}^d\right)=\left\{\underline{{\bf y}}_d\in \re^d: \underline{{\bf x}}_d=\mu_R(\underline{{\bf y}}_d)\in A\cap C^d_{\frac{R^2}{ct},R}\right\}$$
then, by means of Jacobi transformation formula, we have that
\begin{align}\label{eq:step2dens}
P_n\left\{\underline{{\bf Y}}_d(t)\in A\cap C^d_{\frac{R^2}{ct},R}\right\}&=P_n\{\underline{{\bf X}}_d(t)\in D_A\}\notag\\
&=\int_{D_A}p_n(\underline{{\bf y}}_d,t)\prod_{k=1}^d\de y_k\notag\\
&=\int_{A\cap C^d_{\frac{R^2}{ct},R}} p_n\left(\mu_R^{-1}(\underline{{\bf x}}_d),t\right)|\text{det}(J_{\mu_R^{-1}}(\underline{{\bf x}}_d))|\prod_{k=1}^d \de x_k\notag\\
&=\int_{A\cap C^d_{\frac{R^2}{ct},R}} p_n\left(\mu_R(\underline{{\bf x}}_d),t\right)|\text{det}(J_{\mu_R}(\underline{{\bf x}}_d))|\prod_{k=1}^d \de x_k
\end{align}
where in the last step we have used the following facts: $\mu_R=\mu_R^{-1}$ (which follows from the property P3)) and $|\text{det}(J_{\mu_R}(\underline{{\bf x}}_d))|=\frac{R^{2d}}{||\underline{{\bf x}}_d||^{2d}}$. From \eqref{eq:step1dens} and \eqref{eq:step2dens}, the result \eqref{eq:condens} immediately follows.

\end{proof}

From Lemma \ref{lemma1} emerges that the random process $\{\underline{\bf X}_d(t),t>0\}$ is isotropic, namely $Q(\underline{\bf X}_d(t))\sim \underline{\bf X}_d(t)$ for all $Q\in O(d)$ and $p_n(\underline{{\bf x}}_d,t)=p_n(||\underline{{\bf x}}_d||,t)$. Furthermore, from \eqref{eq:densfree}, we have that $\{\underline{\bf X}_d^*(t),t>0\}$  is invariant by rotation as well. Therefore, $p_n^*(\underline{{\bf x}}_d,t)$ depends on the distance $||\underline{{\bf x}}_d||$ and thus we can write 
\begin{align}\label{eq:rotinv}
p_n^*(\underline{{\bf x}}_d,t)&=p_n^*(||\underline{{\bf x}}_d||,t)\notag\\
&=p_n(||\underline{{\bf x}}_d||,t){\bf 1}_{(0,R)}(||\underline{{\bf x}}_d||)+\frac{R^{2d}}{||\underline{{\bf x}}_d||^{2d}}p_n\left(\frac{R^2}{||\underline{{\bf x}}_d||},t\right){\bf 1}_{(R^2/ct,R]}(||\underline{{\bf x}}_d||)
\end{align}
for $t>\frac Rc.$
\begin{definition}
The reflecting radial process $\{D_d^*(t),t>0\},$ represents the Euclidean distance from $\underline{{\bf 0}}_d$ of the position $\underline{{\bf X}}_d^*(t)$, namely $D_d^*(t)=||\underline{{\bf X}}_d^*(t)||$. It can be defined by
\begin{align}
D_d^*(t)=D_d(t){\bf 1}_{(0,R)}(D_d(t))+\frac{R^2}{D_d(t)}{\bf 1}_{[R,ct]}(D_d(t)),
\end{align}
where $D_d(t) =||\underline{{\bf X}}_d(t)||$. 
\end{definition}

 As a consequence of the isotropic structure of $\{\underline{\bf X}_d^*(t),t>0\}$, the conditional density function of $\{D_d^*(t),t>0\},$ becomes
\begin{equation}\label{eq:distancecircinv}
q_{n,d}^*(r,t)=r^{d-1}\text{area}(\mathbb{S}_{1}^{d-1})p_n^*(r,t){\bf 1}_{(0,R]}(r).
\end{equation}

\subsection{Reflecting Dirichlet random flights}

A suitable choice of the distribution $g({\bf \underline\tau}_n; t)$ leads to explicit expressions for the conditional density functions \eqref{eq:condens}. As we have seen in Section \ref{sec:prelim} the Dirichlet distributions play a special role in the study of random flights. The assumption \eqref{eq:dirichlet} and the results \eqref{eq:conddensfree} and \eqref{eq:densradialfree} imply that the reflecting process  $\{\underline{{\bf X}}_d^*(t),t>0\}$ has probability law \eqref{eq:condens} given by 
\begin{align}\label{eq:densrefl}
p_{n,h}^*(\underline{{\bf x}}_d,t)&=\frac{\Gamma(\frac{n+1}{2}(d-h)+\frac h2)}{\Gamma(\frac n2(d-h))}\frac{1}{\pi^{\frac d2}(ct)^{(n+1)(d-h)+h-2}}\\
&\quad\times\left[(c^2t^2-||\underline{{\bf x}}_d||^2)^{\frac n2(d-h)-1}{\bf 1}_{B_R^d}(\underline{{\bf x}}_d)+\frac{R^{2d}}{||\underline{{\bf x}}_d||^{2d}}\left(c^2t^2-\frac{R^4}{||\underline{{\bf x}}_d||^2}\right)^{\frac n2(d-h)-1}{\bf 1}_{C^d_{\frac{R^2}{ct},R}}(\underline{{\bf x}}_d)\right],\notag
\end{align}
with $t>\frac Rc$, while $\{D_d^*(t),t>0\}$ has the following conditional distribution
\begin{align}\label{eq:radialcondensdir}
q_{n,d,h}^*(r,t)&=\frac{2\Gamma(\frac{n+1}{2}(d-h)+\frac h2)}{\Gamma(\frac d2)\Gamma(\frac n2(d-h))(ct)^{(n+1)(d-h)+h-2}}\\
&\quad\times\left[r^{d-1}(c^2t^2-r^2)^{\frac n2(d-h)-1}{\bf 1}_{(0,R)}(r)+\frac{R^{2d}}{r^{d+1}}\left(c^2t^2-\frac{R^4}{r^2}\right)^{\frac n2(d-h)-1}{\bf 1}_{(R^2/ct,R]}(r)\right],\notag
\end{align}
with $t>\frac Rc$. In this case we call these random walks ``reflecting Dirichlet random flights''.

\begin{remark}\label{remarkun}
The probability $P_n\{\underline{{\bf X}}_d(t)\in \de \underline{{\bf x}}_d\}$ is uniform inside the ball $B_{ct}^d$ in the following cases: (i) $h=1, d=2, n=2$; (ii) $h=1, d=3, n=1$;  (iii) $h=2, d=3, n=2$; (iv) $h=2, d=4, n=1$.

In the cases (i)-(iv), we have that the function $p_{n,h}^*(\underline{{\bf x}}_d,t)$ becomes
\begin{align*}
p_{2,1}^*(\underline{{\bf x}}_2,t)&=\frac{1}{\pi(ct)^2}\left[{\bf 1}_{B_R^2}(\underline{{\bf x}}_2)+\frac{R^{4}}{||\underline{{\bf x}}_2||^{4}}{\bf 1}_{C^2_{\frac{R^2}{ct},R}}(\underline{{\bf x}}_2)\right],
\\
p_{1,1}^*(\underline{{\bf x}}_3,t)&=p_{2,2}^*(\underline{{\bf x}}_3,t)=\frac{\Gamma(\frac 52)}{\pi^{\frac 32}(ct)^3}\left[{\bf 1}_{B_R^3}(\underline{{\bf x}}_3)+\frac{R^{6}}{||\underline{{\bf x}}_3||^{6}}{\bf 1}_{C^3_{\frac{R^2}{ct},R}}(\underline{{\bf x}}_3)\right],
\\
p_{1,2}^*(\underline{{\bf x}}_4,t)&=\frac{\Gamma(3)}{\pi^2(ct)^4}\left[{\bf 1}_{B_R^4}(\underline{{\bf x}}_4)+\frac{R^{8}}{||\underline{{\bf x}}_4||^{8}}{\bf 1}_{C^4_{\frac{R^2}{ct},R}}(\underline{{\bf x}}_4)\right].
\end{align*}
This implies that the reflecting random flights are never uniformly distributed inside the ball $B_{R}^d$.
\end{remark}

Now, we focus our attention on the distribution function of $\{D_d^*(t),t>0\}$ which also provides information on the probability of the position of the reflecting random flight at time $t>0$, that is
$$P_n\{D_d^*(t)<r\}=P_n\{\underline{\bf X}_d^*(t)\in B_r^d\}, \quad 0<r\leq R.$$ 
The probability distribution of the distance process $\{D_d^*(t),t>0\}$ is related to the Beta distribution as shown in the next result. Let $r\in(0,R]$ and $t>\frac Rc$,
by using \eqref{eq:densradialfree}, we immediately obtain that
\begin{align}\label{eq:probball2}
P_n\{ D_d^*(t)<r\}&=\int_0^{\frac{r^2}{(ct)^2}}\frac{x^{\frac d2-1}(1-x)^{\frac n2(d-h)-1}}{B\left(\frac d2,\frac n2(d-h)\right)}\de x+\left[\int_{\frac{R^4}{(ctr)^2}}^1\frac{x^{\frac d2-1}(1-x)^{\frac n2(d-h)-1}}{B\left(\frac d2,\frac n2(d-h)\right)}\de x\right]{\bf 1}_{(R^2/ct,R]}(r)\notag\\
&=\frac{B\left(\frac{r^2}{(ct)^2};\frac d2,\frac n2(d-h)\right)}{B\left(\frac d2,\frac n2(d-h)\right)}+\left[1-\frac{B\left(\frac{R^4}{(ctr)^2};\frac d2,\frac n2(d-h)\right)}{B\left(\frac d2,\frac n2(d-h)\right)}\right]{\bf 1}_{(R^2/ct,R]}(r)
\end{align}
where $B(a,b):=\Gamma(a)\Gamma(b)/\Gamma(a+b)$
and $B(z;a,b):=\int_0^zu^{a-1}(1-u)^{b-1}du$ is the incomplete gamma function. From \eqref{eq:probball2} we obtain that
\begin{align*}
P_n\{\underline{\bf X}_d^*(t)\in C^d_{\frac{R^2}{ct},R}\}&=P_n\left\{\frac{R^2}{ct}<D_d^*(t)\leq R\right\}\\
&=1-B\left(R^4;\frac d2,\frac n2(d-h)\right)
\end{align*}
The  probability \eqref{eq:probball2} becomes particularly simple for $d=2$ and $h=1$. Indeed, we have that
 
\begin{align}
P_n\{ D_2^*(t)<r\}&=1-\left(1-\frac{r^2}{(ct)^2}\right)^{\frac n2}+\left(1-\frac{R^4}{(ctr)^2}\right)^{\frac n2}{\bf 1}_{(R^2/ct,R]}(r),\quad r\in(0,R].
\end{align}

 Furthermore, if we assume that $h=2, d=2d',d'\geq 2,$ it is possible to write down $P_n\{ D_d^*(t)<r\}$ by means of the probability distribution of binomial r.v.'s. By exploiting the following well-known result
 \begin{equation}\label{eq:relincombeta}
\frac{B(x;a,b)}{B(a,b)}=\sum_{k=a}^{a+b-1}\binom{a+b-1}{k}x^k(1-x)^{a+b-1-k},\quad a,b\in\mathbb N,
\end{equation}
it is not hard to prove that 
\begin{align}
P_n\{D_d^*(t)<r\}&=P\left\{\frac d2\leq Y_{n,d}\leq\frac{n+1}{2}(d-2)\right\}+P\left\{0\leq \hat Y_{n,d}<\frac{d}{2}\right\}{\bf 1}_{(R^2/ct,R]}(r)
\end{align}
where $Y_{n,d}\sim$Bin$\left(\frac{n+1}{2}(d-2),\frac{r^2}{(ct)^2}\right)$ and $\hat Y_{n,d}\sim$Bin$\left(\frac{n+1}{2}(d-2),\frac{R^4}{(ctr)^2}\right)$.

For  $t>\frac Rc$, we are also able to derive the $m$-th moment, with $m\geq 1,$ of $\{D_d^*(t),t>0\}$. We have that
\begin{align}\label{eq:moments}
E_n\{D_d^*(t)\}^m&=\frac{1}{B\left(\frac d2,\frac n2(d-h)\right)}\left[(ct)^m\int_0^{\frac{R^2}{(ct)^2}}x^{\frac {d+m}{2}-1}(1-x)^{\frac{n}{2}(d-h)-1}\de x\right.\notag\\
&\quad\left.+\left(\frac{R^2}{ct}\right)^{m}\int_{\frac{R^2}{(ct)^2}}^1x^{\frac {d-m}{2}-1}(1-x)^{\frac{n}{2}(d-h)-1}\de x\right].
\end{align}

Moreover, if $d>m,$ we can write \eqref{eq:moments} in terms of beta and incomplete beta functions as follows
\begin{align}\label{eq:moments2}
E_n\{D_d^*(t)\}^m &=(ct)^m\frac{B\left(\frac{R^2}{(ct)^2};\frac {d+m}{2},\frac n2(d-h)\right)}{B\left(\frac {d}{2},\frac n2(d-h)\right)}+\left(\frac{R^2}{ct}\right)^{m}\left[1-\frac{B\left(\frac{R^2}{(ct)^2};\frac{ d-m}{2},\frac n2(d-h)\right)}{B\left(\frac d2,\frac n2(d-h)\right)}\right].\end{align}

\subsection{Random flights and the Euler-Poisson-Darboux equation}

In De Gregorio and Orsingher (2012) (Remark 2.7) is observed that the functions
\begin{equation}\label{eq:epd}
f_\beta(\underline{{\bf x}}_d,t):=(c^2t^2-||\underline{{\bf x}}_d||^2)^\beta,\quad ||\underline{{\bf x}}_d||<ct, \beta\in\re,
\end{equation}
satisfy the following Euler-Poisson-Darboux (EPD) partial differential equation
\begin{equation}\label{eq:epdcon}
\frac{\partial ^2u}{\partial t^2}-\frac{2\beta-1+d}{t}\frac{\partial u}{\partial t}=c^2\Delta u
\end{equation}
where $\Delta:=\sum_{k=1}^d\frac{\partial^2}{\partial x_k^2}$. If $\beta=\frac{n}{2}(d-h)-1,$ we have that 
$$f_{\frac{n}{2}(d-h)-1}(\underline{{\bf x}}_d,t)=\frac{\pi^{\frac d2}(ct)^{(n+1)(d-h)+h-2}\Gamma(\frac n2(d-h))}{\Gamma(\frac{n+1}{2}(d-h)+\frac h2)}p_{n,h}(\underline{{\bf x}}_d,t).$$
Therefore, by exploiting the equation \eqref{eq:epdcon}, it is not hard to show that $p_{n,h}(\underline{{\bf x}}_d,t)$ is a solution of the following EPD equation
\begin{equation}\label{eq:epdcon2}
\frac{\partial ^2u}{\partial t^2}+\frac{(n+1)(d-h)+h-1}{t}\frac{\partial u}{\partial t}=c^2\Delta u.
\end{equation}
The projection of the random process $\{\underline{{\bf X}}_d(t),t>0\}$ onto a lower space of dimension $m,$ implies that the conditional marginal distributions become
\begin{equation*}
p_{n,h}^d(\underline{{\bf x}}_m,t)=\frac{\Gamma(\frac{n+1}{2}(d-h)+\frac h2)}{\Gamma(\frac{ n+1}{2}(d-h)+\frac{h-m}{2})}\frac{(c^2t^2-||\underline{{\bf x}}_m||)^{\frac{n+1}{2}(d-h)-\frac{m-h}{2}-1}}{\pi^{\frac m2}(ct)^{(n+1)(d-h)+h-2}},
\end{equation*}
(see formulas (2.26) and (2.27) of De Gregorio and Orsingher, 2012). By means of the same considerations used for the density function $p_{n,h}(\underline{{\bf x}}_d,t)$ we obtain that $p_{n,h}^d(\underline{{\bf x}}_m,t)$ is still solution of the EPD equation \eqref{eq:epdcon}.

In the same spirit of the previous considerations, the function
$$\bar f_\beta(\underline{{\bf x}}_d,t):=\left(c^2t^2-	\frac{R^4}{||\underline{{\bf x}}_d||^2}\right)^\beta,\quad\frac{R^2}{ct}< ||\underline{{\bf x}}_d||\leq R, \beta\in\re,
$$
solves the following EPD partial differential equation with time and space varying coefficients
\begin{equation}\label{eq:epd2}
\frac{\partial ^2u}{\partial t^2}-\frac{a_\beta(\underline{{\bf x}}_d)}{t}\frac{\partial u}{\partial t}=c^2\Delta u,\end{equation}
where $$a_\beta(\underline{{\bf x}}_d):=2\beta-1+\frac{R^4}{||\underline{{\bf x}}_d||^2}\left[\frac{4-d}{||\underline{{\bf x}}_d||^2}+2(\beta-1)\left(\frac{1-R^4/||\underline{{\bf x}}_d||^4}{c^2t^2-R^4/||\underline{{\bf x}}_d||^2}\right)\right].$$ For $d=4$ and $\beta=1$, we have that the function $\bar f_1(\underline{{\bf x}}_4,t)$
satisfies the equation
$$\frac{\partial ^2u}{\partial t^2}-\frac{1}{t}\frac{\partial u}{\partial t}=c^2\Delta u.$$ 

 By setting 
 $$\bar p_{n,h}(\underline{{\bf x}}_d,t):=\frac{\Gamma(\frac{n+1}{2}(d-h)+\frac h2)}{\Gamma(\frac n2(d-h))}\frac{\frac{R^{2d}}{||\underline{{\bf x}}_d||^{2d}}}{\pi^{\frac d2}(ct)^{(n+1)(d-h)+h-2}}\left(c^2t^2-\frac{R^4}{||\underline{{\bf x}}_d||^2}\right)^{\frac n2(d-h)-1},$$
 with $\frac{R^2}{ct}< ||\underline{{\bf x}}_d||\leq R,$
 we have that 
$$\bar f_{\frac{n}{2}(d-h)-1}(\underline{{\bf x}}_d,t)=\frac{\pi^{\frac d2}(ct)^{(n+1)(d-h)+h-2}\Gamma(\frac n2(d-h))}{\Gamma(\frac{n+1}{2}(d-h)+\frac h2)}\frac{||\underline{{\bf x}}_d||^{2d}}{R^{2d}}\bar p_{n,h}(\underline{{\bf x}}_d,t).$$
Therefore, by exploiting the equation \eqref{eq:epd2}, we conclude that $\bar p_{n,h}(\underline{{\bf x}}_d,t)$ is a solution of the following partial differential equation

\begin{align}\label{eq:pderef}
&\frac{\partial ^2u}{\partial t^2}+\frac{2(2\beta+d)-a_\beta(\underline{{\bf x}}_d)}{t}\frac{\partial u}{\partial t}+\left[\frac{(2\beta+d)((2\beta+d-1)-a_\beta(\underline{{\bf x}}_d))}{t^2}\right]u\notag\\
&=c^2\left[\Delta +\frac{2d(3d-2)}{||\underline{{\bf x}}_d||^2}+\frac{4d\langle \underline{{\bf x}}_d,\nabla \rangle}{||\underline{{\bf x}}_d||^2}\right]u,
\end{align}
where $\beta=\frac{n}{2}(d-h)-1$ and $\nabla u:=$grad$u$. The equation \eqref{eq:pderef} no longer has the structure of the EPD equation.

\subsection{On the unconditional probability distributions}

In order to obtain unconditional densities for $\{\underline{\bf X}_d^*(t),t>0\},$ we should specify the probability distribution of $\mathcal N(t)$. Different choices of the above probability law lead to different unconditional densities of the reflecting random flight. We assume that the number of deviations $\mathcal{N}(t)=\mathcal{N}_d(t), d\geq 2$, at time $t>0$, possesses the distribution of a weighted Poisson random variable. Let $\{N(t),t>0\}$ be a homogeneous Poisson process with rate $\lambda>0,$ a random variable $\mathcal{N}_d(t)$ has weighted Poisson probability distribution if
\begin{equation}
P\{\mathcal{N}_d(t)=n\}=\frac{w_n P\{N(t)=n\}}{\sum_{k=0}^\infty w_k P\{N(t)=k\}}, \quad n\in\mathbb N_0,
\end{equation}
where $w_ks$ are non-negative weight functions with $0<\sum_{k=0}^\infty  w_k P\{N(t)=k\}<\infty$ (for more details see Balakrishnan and Kozubowski, 2008). In our context, we choose 
the weights as follows
$$w_k=\frac{k!}{\Gamma((\frac{d-h}{2})k+\frac d2)}$$
and then for this choice we obtain that
\begin{equation}\label{lawgenpoi}
P\left\{\mathcal{N}_d(t)=n\right\}=\frac{1}{E_{\frac{d-h}{2},\frac d2}(\lambda t)}\frac{(\lambda t)^n}{\Gamma((\frac{d-h}{2})n+\frac d2)},\quad n\in\mathbb{N}_0,
\end{equation}
where $E_{\alpha,\beta}(x)=\sum_{k=0}^\infty \frac{x^k}{\Gamma(\alpha k+\beta)},\, x\in\mathbb{R},\alpha,\beta>0,$ is the two-parameter Mittag-Leffler function. The random variable $\mathcal{N}_d(t)$ with probability distribution \eqref{lawgenpoi} coincides with (4.1) in Beghin and Orsingher (2009).
\begin{theorem}
For $t>\frac Rc$ and by assuming \eqref{lawgenpoi},  we have that the unconditional probability distribution of $\{\underline{\bf X}_{d}^*(t),t>0\}$ becomes
\begin{equation}\label{eq:comprdis}
P\{\underline{\bf X}_{d}^*(t)\in \de\underline{\bf x}_{d}\}=p_h^*(\underline{\bf x}_d,t)\prod_{k=1}^d\de x_k+\frac{1}{E_{\frac{d-h}{2},\frac d2}(\lambda t)\Gamma(\frac d2)}\mu^*(\de\underline{\bf x}_d )
\end{equation}
where
\begin{align}\label{eq:unconddensrefl0}
p_h^*(\underline{\bf x}_d,t)&=\frac{1}{(ct)^d\pi^{\frac d2}}\frac{1}{E_{\frac{d-h}{2},\frac d2}(\lambda t)}\left\{\left[\gamma_h(||\underline{\bf x}_d||,t)\right]^{1-\frac{2}{d-h}}E_{\frac {d-h}{2},\frac {d-h}{2}}\left(\gamma_h(||\underline{\bf x}_d||,t)\right){\bf 1}_{B_R^d}(\underline{{\bf x}}_d)\right.\notag\\
&\quad+\left.\frac{R^{2d}}{||\underline{{\bf x}}_d||^{2d}}\left[\gamma_h\left(\frac{R^2}{||\underline{\bf x}_d||},t\right)\right]^{1-\frac{2}{d-h}}E_{\frac {d-h}{2},\frac {d-h}{2}}\left(\gamma_h\left(\frac{R^2}{||\underline{\bf x}_d||},t\right)\right){\bf 1}_{C^d_{\frac{R^2}{ct},R}}(\underline{{\bf x}}_d)\right\},
\end{align}
with 
$$\gamma_h(||\underline{\bf x}_d||,t):=\lambda t\left(1-\frac{||\underline{\bf x}_{d}||^2}{c^2t^2}\right)^{\frac {d-h}{2}},$$
while $\mu^*$ is the uniform distribution on $\se^{d-1}_{\frac{R^2}{ct}}.$

\end{theorem}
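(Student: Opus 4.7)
The plan is to apply the law of total probability, conditioning on $\mathcal{N}_d(t)$:
\begin{equation*}
P\{\underline{\bf X}_d^*(t)\in \de\underline{\bf x}_d\}=\sum_{n=0}^{\infty}P\{\mathcal{N}_d(t)=n\}\,P_n\{\underline{\bf X}_d^*(t)\in \de\underline{\bf x}_d\}.
\end{equation*}
The $n=0$ term will produce the singular component $\mu^*$; the terms $n\geq 1$, for which the explicit density \eqref{eq:densrefl} is available, will produce the absolutely continuous density $p_h^*$.

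First I would handle $n=0$. If no deviation occurs, then $\underline{\bf X}_d(t)=ct\,\underline{v}$ with $\underline{v}$ uniform on $\se_1^{d-1}$, so $\underline{\bf X}_d(t)$ is uniform on $\se_{ct}^{d-1}$. Since $t>R/c$, Definition \ref{def:refsp} and \eqref{eq:def2} give $\underline{\bf X}_d^*(t)=\mu_R(ct\,\underline{v})=(R^2/ct)\underline{v}$, i.e.\ uniform on $\se_{R^2/ct}^{d-1}$. Using $P\{\mathcal{N}_d(t)=0\}=[\Gamma(\tfrac d2)E_{(d-h)/2,d/2}(\lambda t)]^{-1}$ from \eqref{lawgenpoi}, this reproduces the singular summand in \eqref{eq:comprdis}.

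For $n\geq 1$ I would substitute \eqref{eq:densrefl} and \eqref{lawgenpoi} into the sum. The key algebraic simplification is the identity
\begin{equation*}
\tfrac{n+1}{2}(d-h)+\tfrac{h}{2}=\tfrac{(d-h)n}{2}+\tfrac{d}{2},
\end{equation*}
so the gamma factor in the numerator of $p_{n,h}^*$ cancels against $\Gamma(\tfrac{(d-h)n}{2}+\tfrac{d}{2})$ in the denominator of the weighted Poisson probability. Collecting the $ct$-powers (the total exponent reduces to $-d$) and rewriting $(c^2t^2-r^2)^{(d-h)n/2-1}=(ct)^{(d-h)n-2}(1-r^2/(ct)^2)^{(d-h)n/2-1}$ with $r=||\underline{\bf x}_d||$, the interior-of-the-ball contribution becomes
\begin{equation*}
\frac{{\bf 1}_{B_R^d}(\underline{\bf x}_d)}{\pi^{d/2}(ct)^d E_{(d-h)/2,d/2}(\lambda t)}\cdot\frac{1}{1-r^2/(ct)^2}\sum_{n=1}^{\infty}\frac{\gamma_h(r,t)^n}{\Gamma(\tfrac{(d-h)n}{2})}.
\end{equation*}
Shifting the index in the defining series of the two-parameter Mittag-Leffler function identifies the sum with $\gamma_h(r,t)\,E_{(d-h)/2,(d-h)/2}(\gamma_h(r,t))$; combining this $\gamma_h$ with the prefactor $(1-r^2/(ct)^2)^{-1}=(\lambda t/\gamma_h)^{2/(d-h)}$ yields the power $\gamma_h^{1-2/(d-h)}$ stated in \eqref{eq:unconddensrefl0}. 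The reflected piece (second indicator in \eqref{eq:densrefl}) is treated by the identical calculation applied to the image $R^2\underline{\bf x}_d/||\underline{\bf x}_d||^2$: the Jacobian $R^{2d}/||\underline{\bf x}_d||^{2d}$ is inert under the summation and $r$ is simply replaced by $R^2/||\underline{\bf x}_d||$.

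The main obstacle, and the part most worth checking carefully, is the three-way bookkeeping of powers of $ct$, of $(1-r^2/(ct)^2)$, and of $\lambda t$, so that the index shift lands on exactly $E_{(d-h)/2,(d-h)/2}$ and the residual prefactor collapses to $\gamma_h^{1-2/(d-h)}$ rather than to a neighbouring Mittag-Leffler function; this is pure manipulation, but any slip in tracking the $(\lambda t)^{2/(d-h)}$-type factors will distort the final expression.
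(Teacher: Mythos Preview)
Your strategy is exactly the paper's: condition on $\mathcal N_d(t)$, isolate the $n=0$ term to produce the singular mass on $\se^{d-1}_{R^2/ct}$, and for $n\ge 1$ sum the Dirichlet densities \eqref{eq:densrefl} against the weighted Poisson law \eqref{lawgenpoi}. The paper does not carry out the series summation but simply cites Theorem~5 of De Gregorio and Orsingher (2012); you supply that computation explicitly. So there is no difference in approach.

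One concrete slip, precisely in the place you yourself flagged: from $(1-r^2/(ct)^2)^{-1}=(\lambda t/\gamma_h)^{2/(d-h)}$ you get
\[
\gamma_h\cdot(1-r^2/(ct)^2)^{-1}=(\lambda t)^{2/(d-h)}\,\gamma_h^{\,1-2/(d-h)},
\]
not $\gamma_h^{\,1-2/(d-h)}$ alone; the factor $(\lambda t)^{2/(d-h)}$ survives. This is consistent with the paper's own Remark immediately following the theorem: for $d-h=2$ the Remark's prefactor is $\lambda/(c^3t^2\pi^{d/2})=\lambda t/((ct)^d\pi^{d/2})$ for $d=3$, which carries exactly that extra $\lambda t=(\lambda t)^{2/(d-h)}$. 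So your computation is correct up to this missing factor; just do not drop it when writing the final expression.
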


\begin{proof}
The second term in \eqref{eq:comprdis} arises from the following considerations. The probability distribution of the random flight $\{\underline{\bf X}_d^*(t),t>0\}$ admits a singular component emerging in the case $\mathcal{N}_d(t)=0,$ that is if the initial direction of the motion does not change up to time $t$. For $t>\frac Rc,$ the circular inversion of $\mathbb S_{ct}^{d-1}$ with respect to $\mathbb S_R^{d-1}$ leads to $\mathbb S_{\frac{R^2}{ct}}^{d-1}$.  Therefore, if there are no changes of direction in $(0,t]$, the reflected path reaches $\mathbb S_{\frac{R^2}{ct}}^{d-1}$ and
$$P\left\{\underline{\bf X}_d^*(t)\in\mathbb S_{\frac{R^2}{ct}}^{d-1}\right\}=P\{\mathcal{N}_d(t)=0\}=\frac{1}{E_{\frac{d-h}{2},\frac d2}(\lambda t)\Gamma(\frac d2)}.$$

For the remaining part of the distribution $P\{\underline{\bf X}_{d}^*(t)\in \de\underline{\bf x}_{d}\}$ we can observe that
\begin{align*}
p_h^*(\underline{\bf x}_d,t)&=\sum_{n=1}^\infty p_{n,h}^*(\underline{\bf x}_d,t)P\{\mathcal N_d(t)=n\}
\end{align*}
where $p_{n,h}(\underline{\bf x}_d,t)$ is defined by \eqref{eq:densrefl}. The same steps performed in the proof of Theorem 5 in De Gregorio and Orsingher (2012), lead to the result \eqref{eq:unconddensrefl0}.

\end{proof}

\begin{remark}
For $\frac{d-h}{2}=1,$ that is $d=3$ for $h=1$ and $d=4$ for $h=2$, the function \eqref{eq:unconddensrefl} reduces to
\begin{align*}
p_h^*(\underline{\bf x}_d,t)&=\frac{\lambda }{c^3t^2\pi^{\frac d2}}\frac{1}{E_{1,\frac d2}(\lambda t)}\left\{ \exp\left\{\lambda t\left(1-\frac{||\underline{\bf x}_{d}||^2}{c^2t^2}\right)\right\}{\bf 1}_{B_R^d}(\underline{{\bf x}}_d)\right.\\
&\quad\left.+\frac{R^{2d}}{||\underline{{\bf x}}_d||^{2d}}\exp\left\{\lambda t\left(1-\frac{R^4}{c^2t^2||\underline{{\bf x}}_d||^2}\right)\right\}{\bf 1}_{C^d_{\frac{R^2}{ct},R}}(\underline{{\bf x}}_d)\right\},
\end{align*}
since $E_{1,1}(x)=e^x.$

\end{remark}

We observe that the Dirichlet distribution \eqref{eq:dirichlet} with $h=1$ and $d=2$ or $h=2$ and $d=4$, reduces to the uniform law in $S_n$. Therefore, alternatively to \eqref{lawgenpoi}, we  can assume that 
$$g(\underline{\tau}_n;t)=\frac{n!}{t^n}{\bf1}_{S_n}(\underline{\tau}_n).$$
In other words,  instead of $\mathcal{N}_d(t)$, we can suppose that the changes of direction are governed by a homogeneous Poisson process  $\{N(t),t>0\}$ with rate $\lambda>0$. Under these assumptions, the unconditional distributions of random flights $\{\underline{\bf X}_d(t),t>0\},d=2,4,$ are given by 
\begin{align}\label{eq:dgo1}
   p_h(\underline{\bf x}_d,t)&=\sum_{n=1}^\infty p_{n,h}(\underline{\bf x}_d,t)P\{N(t)=n\}\notag\\
&=
\begin{cases}
\frac{\lambda e^{-\lambda t}}{2\pi c}\frac{e^{\frac{\lambda}{c}\sqrt{c^2t^2-||\underline{\bf x}_2||^2}}}{\sqrt{c^2t^2-||\underline{\bf x}_2||^2}}{\bf 1}_{B_{ct}^2}(\underline{{\bf x}}_2),& d=2,h=1,\\
\frac{\lambda}{c^4t^3\pi^2}e^{-\frac{\lambda}{c^2t}||\underline{\bf x}_4||^2}\left\{2+\frac{\lambda}{c^2t}(c^2t^2-||\underline{\bf x}_4||^2)\right\}{\bf 1}_{B_{ct}^4}(\underline{{\bf x}}_4),& d=4,h=2.
\end{cases}
\end{align}
(for the case $d=2, h=1,$ see (1.2) of Stadje, 1987, (18) of Masoliver {\it et al}., 1993, (20) of Kolesnik and Orsingher, 2005, and for the case $d=4,h=2,$ see formula (3.2) of Orsingher and De Gregorio, 2007).

Therefore, if we assume that the changes of direction are governed by the homogeneous Poisson process $\{N(t),t>0\}$, the related reflecting random flights $\{\underline{\bf X}_d^*(t),t>0\},d=2,4,$ have unconditional density functions (in a generalized sense) equal to
\begin{align}
f_{h}^*(\underline{{\bf x}}_d,t)&=p_h(\underline{{\bf x}}_d,t){\bf 1}_{B_R^d}(\underline{{\bf x}}_d)+\frac{R^{2d}}{||\underline{{\bf x}}_d||^{2d}}p_h\left(R^2\frac{\underline{{\bf x}}_d}{||\underline{{\bf x}}_d||^2},t\right) {\bf 1}_{C^d_{\frac{R^2}{ct},R}}(\underline{{\bf x}}_d)\notag\\
&\quad+\frac{e^{-\lambda t}}{\text{area}\left(\se_{R^2/ct}^{d-1}\right)}\delta_{\{R^2/ct\}}(||\underline{{\bf x}}_d||),
\end{align}
where $p_h(\underline{{\bf x}}_d,t)$ is given by \eqref{eq:dgo1} and the $\frac{e^{-\lambda t}}{\text{area}\left(\se_{R^2/ct}^{d-1}\right)}\delta_{\{R^2/ct\}}(||\underline{{\bf x}}_d||)$ emerges if $N(t)=0$.

\begin{theorem}
For $t>\frac Rc$, $0<r\leq R$ and $d=2, h=1,$ we obtain that 
\begin{align}
P\{D_2^*(t)<r\}&=\left[1-\exp\left\{-\lambda t+\frac{\lambda}{c}\sqrt{c^2t^2-r^2}\right\}\right]{\bf 1}_{(0,R]}(r)\notag\\
&\quad+\exp\left\{-\lambda t+\frac{\lambda}{c}\sqrt{c^2t^2-R^4/r^2}\right\}{\bf 1}_{(R^2/ct,R]}(r)
\end{align}
\end{theorem}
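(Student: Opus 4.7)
The plan is to compute $P\{D_2^*(t)<r\}$ by conditioning on $N(t)$, summing the conditional formula from \eqref{eq:probball2} against the Poisson weights, and carefully handling the singular atom contributed by $\{N(t)=0\}$. For $d=2$, $h=1$ the Dirichlet law for $\underline\tau_n$ reduces to the uniform distribution on $S_n$, so one may indeed let the changes of direction be governed by a homogeneous Poisson process $\{N(t),t>0\}$ of rate $\lambda$. For $t>R/c$ and $r\in(0,R]$, I would start from the decomposition
\begin{equation*}
P\{D_2^*(t)<r\}=e^{-\lambda t}\,{\bf 1}_{(R^2/ct,\,R]}(r)+\sum_{n=1}^{\infty}P_n\{D_2^*(t)<r\}\,\frac{e^{-\lambda t}(\lambda t)^n}{n!},
\end{equation*}
where the first term captures the event $\{N(t)=0\}$: the undeflected trajectory reaches $\se^1_{ct}$ and is reflected by $\mu_R$ onto $\se^1_{R^2/ct}$ according to \eqref{eq:def2}, so the distance equals $R^2/ct$ deterministically.

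Specializing \eqref{eq:probball2} to $d=2$, $h=1$ gives
\begin{equation*}
P_n\{D_2^*(t)<r\}=1-\Bigl(1-\tfrac{r^2}{c^2t^2}\Bigr)^{n/2}+\Bigl(1-\tfrac{R^4}{c^2t^2r^2}\Bigr)^{n/2}{\bf 1}_{(R^2/ct,\,R]}(r).
\end{equation*}
The key algebraic step is the identity $(1-\alpha^2)^{n/2}=\bigl(\sqrt{1-\alpha^2}\bigr)^{n}$, which turns each Poisson-weighted sum into an exponential. With $\alpha=r/(ct)$ one gets
\begin{equation*}
\sum_{n=1}^{\infty}\Bigl(1-\tfrac{r^2}{c^2t^2}\Bigr)^{n/2}\frac{e^{-\lambda t}(\lambda t)^n}{n!}=\exp\!\Bigl\{-\lambda t+\tfrac{\lambda}{c}\sqrt{c^2t^2-r^2}\Bigr\}-e^{-\lambda t},
\end{equation*}
and the analogous identity holds with $R^4/r^2$ in place of $r^2$.

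Substituting back, the $n\ge 1$ contribution from the first piece of $P_n$ yields $1-\exp\{-\lambda t+\tfrac{\lambda}{c}\sqrt{c^2t^2-r^2}\}$ throughout $(0,R]$, which already matches the statement on $(0,R^2/ct]$. On $(R^2/ct,R]$, the second piece of $P_n$ adds $\exp\{-\lambda t+\tfrac{\lambda}{c}\sqrt{c^2t^2-R^4/r^2}\}-e^{-\lambda t}$, and the residual $-e^{-\lambda t}$ is cancelled exactly by the singular mass $e^{-\lambda t}\,{\bf 1}_{(R^2/ct,R]}(r)$ coming from $\{N(t)=0\}$. This cancellation, which fuses the atom on $\se^1_{R^2/ct}$ with the continuous part of the law, is the only subtle bookkeeping step; the remainder is the exponential sum identity.
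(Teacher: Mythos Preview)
Your proof is correct, but it follows a different route from the paper. The paper works directly with the \emph{unconditional} density $p_1(\underline{{\bf x}}_2,t)=\frac{\lambda e^{-\lambda t}}{2\pi c}\,\frac{e^{\frac{\lambda}{c}\sqrt{c^2t^2-\|\underline{{\bf x}}_2\|^2}}}{\sqrt{c^2t^2-\|\underline{{\bf x}}_2\|^2}}$ (already summed over $n$), integrates it in polar coordinates over $B_r^2$ and over $C^2_{R^2/ct,r}$ via the substitution $u=\sqrt{c^2t^2-\rho^2}$ (resp.\ $u=\sqrt{c^2t^2-R^4/\rho^2}$), and then adds the atom $e^{-\lambda t}$ coming from $\{N(t)=0\}$. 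You instead reverse the order of operations: you take the conditional distribution function $P_n\{D_2^*(t)<r\}$---which the paper has already computed in closed form for $d=2$, $h=1$---and sum it against the Poisson weights via the elementary identity $\sum_{n\ge 1}\beta^{n}(\lambda t)^n/n!=e^{\lambda t\beta}-1$ with $\beta=\sqrt{1-\alpha^2}$. Both arguments are equally short; yours has the slight advantage of not invoking the explicit Stadje/Kolesnik--Orsingher density and of making the cancellation of the $-e^{-\lambda t}$ term against the singular mass transparent, while the paper's computation shows directly how the two exponential pieces arise as antiderivatives of the reflected density.
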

\begin{proof}
Let us fix $r\in(0,R]$ and $d=2, h=1$. We have that 
\begin{align*}
P\{D_2^*(t)<r\}&= P\{\underline{{\bf X}}_2^*(t)\in B_r^2\}\\
&=\int_{  B_r^2}p_1(\underline{\bf x}_2,t)\de x_1\de x_2+\int_{ C^2_{\frac{R^2}{ct},R}\cap B_r^2}\frac{R^{4}}{||\underline{{\bf x}}_2||^{4}}p_1\left(\frac{R^2}{||\underline{{\bf x}}_2||},t\right)\de x_1 	\de x_2.
\end{align*}
where $p_1\left(\underline{\bf x}_2,t\right)$ is given by \eqref{eq:dgo1}.
 If $r<R^2/ct$, it is clear that $C^2_{\frac{R^2}{ct},R}\cap B_r^2=\varnothing$ and then
\begin{align*}
P\{\underline{{\bf X}}_2^*(t)\in B_r^2\}
&=\int_{  B_r^2}\frac{\lambda e^{-\lambda t}}{2\pi c}\frac{e^{\frac{\lambda}{c}\sqrt{c^2t^2-||\underline{\bf x}_2||^2}}}{\sqrt{c^2t^2-||\underline{\bf x}_2||^2}}\de x_1\de x_2\\
&=\int_0^r\frac{\lambda e^{-\lambda t}}{c}\rho\frac{e^{\frac{\lambda}{c}\sqrt{c^2t^2-\rho^2}}}{\sqrt{c^2t^2-\rho^2}}\de \rho\\
&=1-\exp\left\{-\lambda t+\frac{\lambda}{c}\sqrt{c^2t^2-r^2}\right\}
\end{align*}

For $r\in(R^2/ct,R]$, we have that $C^2_{\frac{R^2}{ct},R}\cap B_r^2=C^2_{\frac{R^2}{ct},r}$. Therefore
\begin{align*}
\int_{ C^2_{\frac{R^2}{ct},r}}\frac{\lambda e^{-\lambda t}}{2\pi c}\frac{R^{4}}{||\underline{{\bf x}}_2||^4}\frac{e^{\frac{\lambda}{c}\sqrt{c^2t^2-R^4/||\underline{{\bf x}}_2||^2}}}{\sqrt{c^2t^2-R^4/||\underline{{\bf x}}_2||^2}}\de x_1 \de x_2&=\int_{R^2/ct}^r\frac{\lambda e^{-\lambda t}}{c}\frac{R^{4}}{\rho^3}\frac{e^{\frac{\lambda}{c}\sqrt{c^2t^2-R^4/\rho^2}}}{\sqrt{c^2t^2-R^4/\rho^2}}\de \rho\\
&=\exp\left\{-\lambda t+\frac{\lambda}{c}\sqrt{c^2t^2-R^4/r^2}\right\}-\exp\{-\lambda t\}.
\end{align*}
If $R^2/ct<r\leq R$, we also have to consider the discrete part of the distribution of $\{\underline{{\bf X}}_2^*(t),t>0\}.$ Therefore
$$P\{D_2^*(t)=R^2/ct\}=P\{\underline{{\bf X}}_2^*(t)\in \se_{R^2/ct}^{d-1}\}=P\{N(t)=0\}=e^{-\lambda t}.$$
This last fact concludes the proof of the theorem.
\end{proof}

\section{Reflecting random flights on hyperplanes}

\subsection{Definitions and probability distributions}

In this section we introduce a random flight bouncing off a hyperplane. 
Let $H(\underline a_d,b):=\{\underline{{\bf x}}_d\in\re ^d:\, \langle\underline a_d,\underline{{\bf x}}_d\rangle=b;\,\underline a_d\in\re^d,b\in\re\}$ be a hyperplane in $\re^d$. A random flight starting from the origin of $\re^d,$ for sufficiently large values of $t$ can be located beyond the hyperplane $H(\underline a_d,b)$. The spherical set of the possible positions $B_{ct}^d$ is therefore composed by the set $L_{ct}^d:=L_{ct}^d(\underline a_d, b):=\{\underline{{\bf x}}_d\in \mathbb{R}^d:||\underline{{\bf x}}_d||^2< c^2t^2,\langle\underline a_d,\underline{{\bf x}}_d\rangle<b\}$ pertaining to the sample paths which have not crossed $H(\underline a_d,b)$ and the set $U_{ct}^d:=U_{ct}^d(\underline a_d, b):=\{\underline{{\bf x}}_d\in \mathbb{R}^d:||\underline{{\bf x}}_d||^2< c^2t^2,\langle\underline a_d,\underline{{\bf x}}_d\rangle\geq b\}$ related to the trajectories which have gone beyond the hyperplane. Of course, if no deviation is recorded by the random flight up to time $t$, the moving particle attains the sphere $\se_{ct}^{d-1},$ which therefore can be split as $\se_{ct}^{d-1}=\partial L_{ct}^d\cup \partial U_{ct}^d,$ where $\partial L_{ct}^d:=\partial L_{ct}^d(\underline a_d, b):=\{\underline{{\bf x}}_d\in \mathbb{R}^d:||\underline{{\bf x}}_d||^2= c^2t^2,\langle\underline a_d,\underline{{\bf x}}_d\rangle<b\}$ and $\partial U_{ct}^d:=\partial U_{ct}^d(\underline a_d, b):=\{\underline{{\bf x}}_d\in \mathbb{R}^d:||\underline{{\bf x}}_d||^2=c^2t^2,\langle\underline a_d,\underline{{\bf x}}_d\rangle\geq b\}$.

The reflection of the sample paths crossing $H(\underline a_d,b)$ is described in detail in Appendix \ref{refhyper}. Substantially, the incoming and reflected sample paths form the same angle $\theta$ w.r.t. the normal to the hyperplane. Let $\nu:\re^d\to\re$ be the reflecting (bijective) operator with respect to the hyperplane  $H(\underline a_d,b)$ defined as

\begin{equation*}
\nu(\underline{{\bf x}}_d):=\underline{{\bf x}}_d+2\frac{ b-\langle\underline a_d,\underline{{\bf x}}_d\rangle}{\langle\underline a_d,\underline a_d\rangle}\underline a_d.
\end{equation*}
Now, we are able to define the reflecting random flight.  Let $t':=\inf(t:H(\underline a_d,b)\cap  B_{ct}^d\neq \varnothing )$.

\begin{definition}\label{defrefhyper}
The reflecting random flight $\{\underline{{\bf X}}_d'(t),t>0\},$ reflected by the hyperplane $H(\underline a_d,b)$ is constructed by means of the free process $\{\underline{{\bf X}}_d(t),t>0\}$ as follows: 1) if $t<t'$, then $\underline{{\bf X}}_d'(t)=\underline{{\bf X}}_d(t);$ 2) if $t\geq t'$ and at least one change of direction happens during the time interval $[0,t]$, we have that
\begin{align}
\underline{{\bf X}}_d'(t)&=\underline{{\bf X}}_d(t){\bf 1}_{L_{ct}^d }(\underline{{\bf X}}_d(t))+\nu(\underline{{\bf X}}_d(t)){\bf 1}_{U_{ct}^d}(\underline{{\bf X}}_d(t)),
\end{align}
while if no deviation up to time $t$ is recorded
\begin{align}
\underline{{\bf X}}_d'(t)&=\underline{{\bf X}}_d(t){\bf 1}_{\partial L_{ct}^d }(\underline{{\bf X}}_d(t))+\nu(\underline{{\bf X}}_d(t)){\bf 1}_{\partial U_{ct}^d}(\underline{{\bf X}}_d(t)).
\end{align}
\end{definition}

\begin{figure}[t]
\begin{center}
\includegraphics[angle=0,width=0.6\textwidth]{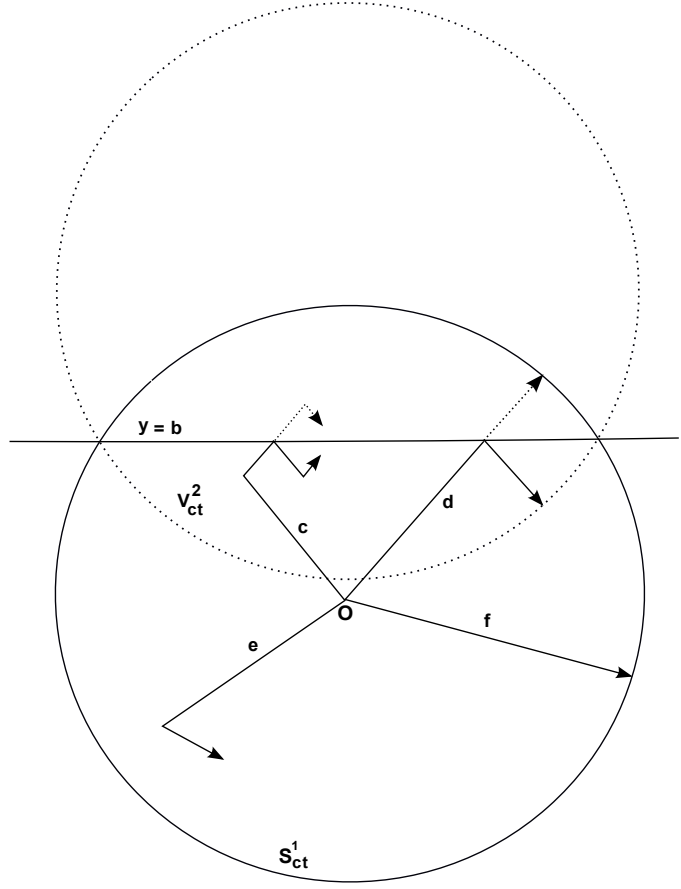}
\caption{Four typical sample paths are depicted. The trajectories ${\bf c}$ and ${\bf d}$ are reflected in the set $V_{ct}^d$, while ${\bf e}$ and ${\bf f}$ never cross the reflecting surface $y=b$.}\label{fig2}
\end{center}
\end{figure}

The set of points of $B_{ct}^d$ obtained by reflection $\nu$ around $H(\underline a_d,b)$ is denoted by $V_{ct}^d$, that is $V_{ct}^d:=V_{ct}^d(\underline a_d, b):=\{\underline{{\bf x}}_d\in \mathbb{R}^d:||\nu(\underline{{\bf x}}_d)||^2< c^2t^2,\langle\underline a_d,\underline{{\bf x}}_d\rangle\leq b\}$, while $\partial V_{ct}^d$ stands for the set obtained by the reflection of $\partial U_{ct}^d$ (see Figure \ref{fig2}). The reflection at the hyperplane preserves the form of the sample paths of the free random flight; i.e. the sample paths of $\{\underline{{\bf X}}_d'(t),t>0\}$ are straight lines as well as the trajectories of the free random flights (see Figure \ref{fig2}). Furthermore, the property Q4) guarantees that the reflected trajectories are symmetrically specular w.r.t. the hyperplane $H(\underline a_d,b)$.

 The conditional distributions of the reflecting random flight $\{\underline{{\bf X}}_d'(t),t>0\}$ are given in the next theorem.
\begin{theorem}
If  $\mathcal N(t)=n$, with $n\geq 1,$ the process $\{\underline{{\bf X}}_d'(t),t>0\},$ has the following conditional density functions
\begin{align}\label{eq:condens2}
p_{n}'(\underline{{\bf x}}_d,t)=\begin{cases}
p_n(\underline{{\bf x}}_d,t){\bf 1}_{B_{ct}^d}(\underline{{\bf x}}_d),& t\leq t'\\
p_n(\underline{{\bf x}}_d,t){\bf 1}_{L_{ct}^d}(\underline{{\bf x}}_d)+p_n\left(\nu(\underline{{\bf x}}_d),t\right){\bf 1}_{V_{ct}^d}(\underline{{\bf x}}_d),& t>t'.
\end{cases}
\end{align}
where $p_n(\underline{{\bf x}}_d,t)$ is equal to \eqref{eq:densfree}.
\end{theorem}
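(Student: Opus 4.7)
The plan is to mirror the proof of the analogous theorem for reflection in $\se_R^{d-1}$, with the circular inversion $\mu_R$ replaced by the hyperplane reflection $\nu$. The case $t\le t'$ is immediate: by the definition of $t'$, no sample path of the free flight has had time to cross $H(\underline a_d,b)$, so $\underline{{\bf X}}_d'(t)\equiv\underline{{\bf X}}_d(t)$ and the density is $p_n(\underline{{\bf x}}_d,t){\bf 1}_{B_{ct}^d}(\underline{{\bf x}}_d)$, with $p_n(\underline{{\bf x}}_d,t)$ furnished by Lemma~\ref{lemma1}.

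For $t>t'$ I would fix an arbitrary Borel set $A$ and, using Definition~\ref{defrefhyper} together with the fact that the indicators of $L_{ct}^d$ and $V_{ct}^d$ have essentially disjoint supports (they meet only on the hyperplane $H(\underline a_d,b)$, which has zero $d$-dimensional Lebesgue measure), decompose
\begin{align*}
P_n\{\underline{{\bf X}}_d'(t)\in A\}
&=P_n\{\underline{{\bf X}}_d(t)\in A\cap L_{ct}^d\}+P_n\{\nu(\underline{{\bf X}}_d(t))\in A\cap V_{ct}^d\}\\
&=\int_{A\cap L_{ct}^d}p_n(\underline{{\bf x}}_d,t)\prod_{k=1}^d\de x_k+P_n\{\underline{{\bf X}}_d(t)\in\nu^{-1}(A\cap V_{ct}^d)\}.
\end{align*}
The first integral already accounts for trajectories that never hit the hyperplane, since the second probability pertains to those that have crossed it into $U_{ct}^d$.

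To handle the second probability I would apply the change of variables $\underline{{\bf y}}_d=\nu(\underline{{\bf x}}_d)$. By the properties of $\nu$ gathered in Appendix~\ref{refhyper} (in particular $\nu=\nu^{-1}$ and the fact that $\nu$ is a Euclidean isometry, so $|\det J_\nu(\underline{{\bf x}}_d)|=1$), and since $\nu$ realises a bijection $V_{ct}^d\leftrightarrow U_{ct}^d$ by the very definition of $V_{ct}^d$, the Jacobi transformation formula gives
\begin{equation*}
P_n\{\underline{{\bf X}}_d(t)\in\nu^{-1}(A\cap V_{ct}^d)\}=\int_{A\cap V_{ct}^d}p_n(\nu(\underline{{\bf x}}_d),t)\prod_{k=1}^d\de x_k.
\end{equation*}
Summing the two contributions and reading off the density against $\prod_{k=1}^d\de x_k$ yields~\eqref{eq:condens2}.

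The main obstacle is purely geometric rather than analytic: one must verify that $L_{ct}^d$ and $V_{ct}^d$ partition the reachable region of the reflected process up to a Lebesgue-null set, so that no double counting occurs, and that $\nu$ indeed maps $U_{ct}^d$ bijectively onto $V_{ct}^d$. Compared with the spherical case this step is in fact simpler, because the Jacobian degenerates to $1$; moreover, no singular contribution appears for $n\ge 1$, since the free density $p_n$ assigns zero mass to the hyperplane $H(\underline a_d,b)$.
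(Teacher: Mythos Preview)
Your computational route is the same as the paper's: split according to whether the free flight lies in $L_{ct}^d$ or $U_{ct}^d$, then push the second term through the change of variables $\underline{{\bf y}}_d=\nu(\underline{{\bf x}}_d)$ using $\nu=\nu^{-1}$ (property Q3) and $|\det J_\nu|=1$ (property Q5). The algebra and the final formula are correct.

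There is, however, a geometric misconception in your justification. You assert that the indicators of $L_{ct}^d$ and $V_{ct}^d$ have essentially disjoint supports, meeting only on $H(\underline a_d,b)$, and you later worry about verifying that $L_{ct}^d$ and $V_{ct}^d$ partition the reachable region so that ``no double counting occurs''. This is false: $V_{ct}^d$ is the reflection of the cap $U_{ct}^d$ across the hyperplane, so it lies on the \emph{same} side of $H(\underline a_d,b)$ as $L_{ct}^d$ and in fact has substantial overlap with it (compare Figure~\ref{fig2} and the case analysis in Section~5.2). On $V_{ct}^d\cap L_{ct}^d$ the density $p_n'$ is genuinely the sum $p_n(\underline{{\bf x}}_d,t)+p_n(\nu(\underline{{\bf x}}_d),t)$, receiving contributions both from unreflected trajectories and from reflected ones arriving at the same point. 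The decomposition is valid not because the \emph{target} sets $L_{ct}^d$ and $V_{ct}^d$ are disjoint, but because the \emph{source} events $\{\underline{{\bf X}}_d(t)\in L_{ct}^d\}$ and $\{\underline{{\bf X}}_d(t)\in U_{ct}^d\}$ partition (up to a null set) the support of the free process; this is what Definition~\ref{defrefhyper} is built on. Once you correct this point, your argument coincides with the paper's.
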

\begin{proof} 
The case $t\leq t'$ is trivial. We assume that $t>t'.$
Let $A$ be  a Borel set such that $A\cap H(\underline a_d,b)\neq \varnothing$. We observe that
\begin{align}\label{eq:step1dens2}
P_n\{\underline{{\bf X}}_d'(t)\in A\}&=P_n\{\underline{{\bf X}}_d(t)\in A\cap L_{ct}^d\}+P_n\{\nu(\underline{{\bf X}}_d(t))\in A\cap V_{ct}^d\}\notag\\
&=\int_{ A\cap L_{ct}^d}p_n(\underline{{\bf x}}_d,t)\prod_{k=1}^d\de x_k+P_n\{\nu(\underline{{\bf X}}_d(t))\in A\cap V_{ct}^d\}.
\end{align} 
Let now
$$B_A:=\left\{\underline{{\bf y}}_d\in \re^d: \underline{{\bf x}}_d=\nu(\underline{{\bf y}}_d)\in A\cap V_{ct}^d\right\}$$
and thus, by means of Jacobi's transformation formula, we have that
\begin{align}\label{eq:step2dens2}
P_n\{\nu(\underline{{\bf X}}_d(t))\in A\cap V_{ct}^d\}&=P_n\{\underline{{\bf X}}_d(t)\in B_A\}\notag\\
&=\int_{B_A}p_n(\underline{{\bf y}}_d,t)\prod_{k=1}^d\de y_k\notag\\
&=\int_{A\cap V_{ct}^d} p_n\left(\nu^{-1}(\underline{{\bf x}}_d),t\right)|\text{det}(J_{\nu^{-1}}(\underline{{\bf x}}_d))|\prod_{k=1}^d\de x_k\notag\\
&=\int_{A\cap V_{ct}^d} p_n\left(\nu(\underline{{\bf x}}_d),t\right)\prod_{k=1}^d\de x_k
\end{align}
where in the last step we have exploited the facts: $\nu=\nu^{-1}$ (which follows from the property Q3)) and $|\text{det}(J_{\nu}(\underline{{\bf x}}_d))|=1$. From \eqref{eq:step1dens2} and \eqref{eq:step2dens2} the result \eqref{eq:condens2} immediately follows.
\end{proof}

\begin{remark}
In view of the property \eqref{eq:normrefl2}, the reflecting random flights introduced by Definition \ref{defrefhyper} are no longer isotropic. Indeed, for $t>t'$, in the density function \eqref{eq:condens2} appears 
$$p_{n}(\nu(\underline{{\bf x}}_d),t)=p_{n}(||\nu(\underline{{\bf x}}_d)||,t)$$
which does not depend only on the Euclidean distance $||\underline{{\bf x}}_d||.$
\end{remark}

Now, we consider reflecting Dirichlet random flights.
The assumption \eqref{eq:dirichlet} implies that the reflecting process  $\{\underline{{\bf X}}_d'(t),t>0\}$ has probability law \eqref{eq:condens2} given by 
\begin{align}\label{eq:densrefl2}
p_{n,h}'(\underline{{\bf x}}_d,t)&=\frac{\Gamma(\frac{n+1}{2}(d-h)+\frac h2)}{\Gamma(\frac n2(d-h))}\frac{1}{\pi^{\frac d2}(ct)^{(n+1)(d-h)+h-2}}\\
&\quad\times\left[(c^2t^2-||\underline{{\bf x}}_d||^2)^{\frac n2(d-h)-1}{\bf 1}_{L_{ct}^d}(\underline{{\bf x}}_d)+\left(c^2t^2-||\nu(\underline{{\bf x}}_d)||^2\right)^{\frac n2(d-h)-1}{\bf 1}_{V_{ct}^d}(\underline{{\bf x}}_d)\right],\notag
\end{align}
where $t>t'$ and $||\nu(\underline{{\bf x}}_d)||^2$ is given by \eqref{eq:normrefl2}. In the special cases (i)-(iv) mentioned in Remark \ref{remarkun}, the function \eqref{eq:densrefl2} becomes 
\begin{align}
p_{n,h}'(\underline{{\bf x}}_d,t)&=\frac{1}{\text{area}( \se_{ct}^{d-1})}\left[{\bf 1}_{L_{ct}^d}(\underline{{\bf x}}_d)+{\bf 1}_{V_{ct}^d}(\underline{{\bf x}}_d)\right],\quad t>t'.
\end{align}

By assuming that the random number of changes of direction has probability law \eqref{lawgenpoi}, for $t>t',$ we have that the unconditional probability distribution of $\{\underline{\bf X}_{d}'(t),t>0\}$ becomes
\begin{equation}
P\{\underline{\bf X}_{d}'(t)\in \de\underline{\bf x}_{d}\}=p_h'(\underline{\bf x}_d,t)\prod_{k=1}^d\de x_k+\frac{1}{E_{\frac{d-h}{2},\frac d2}(\lambda t)\Gamma(\frac d2)}\mu'(\de\underline{\bf x}_d )
\end{equation}
where
\begin{align}\label{eq:unconddensrefl}
p_h'(\underline{\bf x}_d,t)&=\frac{1}{(ct)^d\pi^{\frac d2}}\frac{1}{E_{\frac{d-h}{2},\frac d2}(\lambda t)}\left\{\left[\gamma_h(||\underline{\bf x}_d||,t)\right]^{1-\frac{2}{d-h}}E_{\frac {d-h}{2},\frac {d-h}{2}}\left(\gamma_h(||\underline{\bf x}_d||,t)\right){\bf 1}_{L_{ct}^d}(\underline{{\bf x}}_d)\right.\notag\\
&\quad+\left.\left[\gamma_h\left(||\nu(\underline{\bf x}_d)||,t\right)\right]^{1-\frac{2}{d-h}}E_{\frac {d-h}{2},\frac {d-h}{2}}\left(\gamma_h\left(||\nu(\underline{\bf x}_d)||,t\right)\right){\bf 1}_{V_{ct}^d}(\underline{{\bf x}}_d)\right\},
\end{align}
with 
$$\gamma_h(||\underline{\bf x}_d||,t):=\lambda t\left(1-\frac{||\underline{\bf x}_{d}||^2}{c^2t^2}\right)^{\frac {d-h}{2}},$$
and $\mu'$ represents the uniform law on $\partial L_{ct}^d\cup \partial V_{ct}^d.$

\begin{remark}
It is not hard to prove that the function
$$\hat f_\beta(\underline{\bf x}_d,t):=\left(c^2t^2-||\nu(\underline{{\bf x}}_d)||^2\right)^\beta,\quad \underline{\bf x}_d\in V_{ct}^d,\beta\in\re,$$
is a solution of the partial differential equation \eqref{eq:epdcon}. Therefore, the second component appearing in \eqref{eq:unconddensrefl}, that is
$$\hat p_{n,h}(\underline{{\bf x}}_d,t):=\frac{\Gamma(\frac{n+1}{2}(d-h)+\frac h2)}{\Gamma(\frac n2(d-h))}\frac{\left(c^2t^2-||\nu(\underline{{\bf x}}_d)||^2\right)^{\frac n2(d-h)-1}}{\pi^{\frac d2}(ct)^{(n+1)(d-h)+h-2}},\quad \quad \underline{\bf x}_d\in V_{ct},$$
is a solution of the EPD equation \eqref{eq:epdcon2}.
\end{remark}

\subsection{On the probability law of the distance from the origin}
For the sake of simplicity, we set $\underline a_d=e_d:=(0,...,0,1)$. Therefore, the hyperplane becomes
$$H(e_d,b)=\{\underline{{\bf x}}_d\in\re ^d:\, x_d=b;\,b>0\}$$
and the reflection map becomes
$$\nu(\underline{{\bf x}}_d):=\underline{{\bf x}}_d+2 (b-x_d),$$with $$||\nu(\underline{{\bf x}}_d)||^2=||\underline{{\bf x}}_d||^2+4b^2-4bx_d.
$$
Under the above assumption, we have that
\begin{align*}
&L_{ct}^d=\{\underline{{\bf x}}_d\in \mathbb{R}^d:||\underline{{\bf x}}_d||^2< c^2t^2,x_d<b\},\\
&V_{ct}^d=\{\underline{{\bf x}}_d\in \mathbb{R}^d:||\underline{{\bf x}}_d||^2+4b^2-4bx_d< c^2t^2,x_d\leq b\}.
\end{align*}

Let $\{D_d'(t),t>0\}$ where $D_d'(t)=||\underline{{\bf X}}_d'(t)||$. Since the process $\{\underline{{\bf X}}_d'(t),t>0\}$ is not isotropic, the probability distribution of $\{D_d'(t),t>0\}$ is more complicated than \eqref{eq:distancecircinv}. Now, we consider the distribution function
$$P_n\{D_d'(t)<r\}$$
with $0<r<ct.$ For $t>t',$ we distinguish the following three cases (see Figure \ref{fig3}):
\begin{itemize}
\item[1.] $0<r<ct-2b$, where the ball $B_r^d$ does not intersect $V_{ct}^d$;
\item[2.] $ct-2b<r<b$, where $B_r^d$ intersects $V_{ct}^d$ but does not overlap $H(e_d,b)$;
\item[3.] $b<r<ct$, where $B_r^d$ intersects $V_{ct}^d$ and $H(e_d,b)$.
\end{itemize}

\begin{figure}[t]
\begin{center}
\includegraphics[angle=0,width=0.6\textwidth]{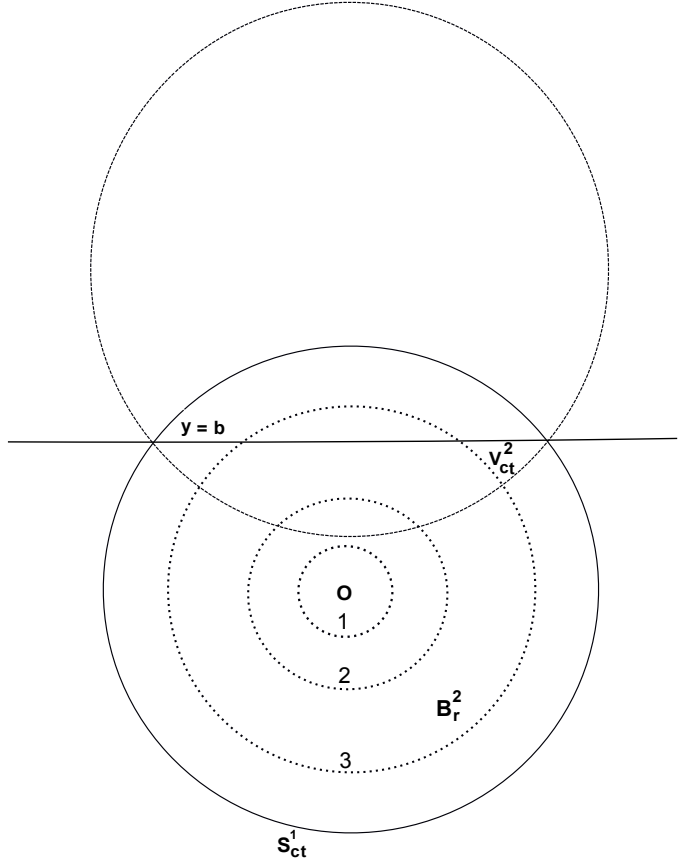}
\caption{The picture represents cases 1., 2. and 3. emerging in the analysis of $P_n\{D_d'(t)<r\}.$}\label{fig3}
\end{center}
\end{figure}

In the case (i), we simply have that
\begin{align}
P_n\{D_d'(t)<r\}=P_n\{D_d(t)<r\}=P_n\{\underline{{\bf X}}_d(t)\in B_{r}^d\}.
\end{align}
In the second case, we must take into account that in $V_{ct}^d\cap  B_{r}^d$ we meet reflected sample paths and thus
\begin{align}
P_n\{D_d'(t)<r\}=P_n\{\underline{{\bf X}}_d(t)\in B_{r}^d\}+P_n\{\nu(\underline{{\bf X}}_d(t))\in V_{ct}^d\cap  B_{r}^d\}.
\end{align}
In the third case $B_r^d=L_r^d\cup U_r^d$ and thus
\begin{align}\label{eq:distancethird}
P_n\{D_d'(t)<r\}=P_n\{\underline{{\bf X}}_d(t)\in L_{r}\}+P_n\{\nu(\underline{{\bf X}}_d(t))\in V_{ct}^d\cap  L_{r}\}.
\end{align}
The reader should consider that sample paths crossing $x_d=b$ and outside $U_r^d$ can contribute to probability \eqref{eq:distancethird} because the reflected trajectories lie within $V_{ct}^d\cap  L_{r}$. All these considerations can be summarized as follows
\begin{align}
P_n\{D_d'(t)<r\}=
\begin{cases}
\int_{B_{r}^d}p_{n}(\underline{{\bf x}}_d,t)\prod_{k=1}^d\de x_k,& 0<r<ct-2b,\\
\int_{B_{r}^d}p_{n}(\underline{{\bf x}}_d,t)\prod_{k=1}^d\de x_k+\int_{ V_{ct}^d\cap  B_{r}^d}p_{n}(\nu(\underline{{\bf x}}_d),t)\prod_{k=1}^d\de x_k,& ct-2b<r<b,\\
\int_{L_{r}^d}p_{n}(\underline{{\bf x}}_d,t)\prod_{k=1}^d\de x_k+\int_{ V_{ct}^d\cap  L_{r}^d}p_{n}(\nu(\underline{{\bf x}}_d),t)\prod_{k=1}^d\de x_k,& b<r<ct.
\end{cases}
\end{align}

 \appendix
\section{Reflection in spheres}\label{refspheres}

We recall the basic facts about the circular inversion or reflection of a point inside a sphere (see, for instance, Ratcliffe, 2006, and Wong, 2009). If we consider a point $\underline{{\bf x}}_d$ inside $\se_R^{d-1}(\underline{{\bf x}}_d^0)$, having polar coordinates equal to $(r,\underline{\theta}_{d-1})$, we can find another point $\underline{{\bf x}}'_d$ in the space $\mathbb{R}^d$ with polar coordinates given by $(r',\underline{\theta}_{d-1})$ ($R<r'$ and the same angle $,\underline{\theta}_{d-1}$), such that $$rr'=R^2.$$
or equivalently 
$$||\underline{{\bf x}}_d-\underline{{\bf x}}_d^0||\cdot ||\underline{{\bf x}}'_d-\underline{{\bf x}}_d^0||=R^2.$$
The point $\underline{{\bf x}}_d$ is called the inverse point of $\underline{{\bf x}}'_d$ with respect to $\se_R^{d-1}(\underline{{\bf x}}_d^0)$. The circular inversion in $\se_R^{d-1}(\underline{{\bf x}}_d^0)$ is defined as the bijective map
$\mu_{R,\underline{{\bf x}}_d^0}:\re^d\setminus \{O\}\to \re^d\setminus \{O\}$ defined as follows
\begin{align}\label{eq:appendixmap1}
\mu_{R,\underline{{\bf x}}_d^0}(\underline{{\bf x}}_d)= R^2\frac{\underline{{\bf x}}_d-\underline{{\bf x}}_d^0}{||\underline{{\bf x}}_d-\underline{{\bf x}}_d^0||^2}+\underline{{\bf x}}_0^d.
\end{align}
We set $\mu_{R,O}(\underline{{\bf x}}_d):=\mu_{R}(\underline{{\bf x}}_d)=R^2\frac{\underline{{\bf x}}_d}{||\underline{{\bf x}}_d||^2}$. 
The map $\mu_R$ has the following properties:
\begin{itemize}
\item[P1)] the points inside the sphere are taken to points outside it and vice versa;
\item[P2)] $\mu_R(\underline{{\bf x}}_d)=\underline{{\bf x}}_d$ if and only if $\underline{{\bf x}}_d\in \se_R^{d-1}$;
\item[P3)] $(\mu_R\circ \mu_R)(\underline{{\bf x}}_d)=\underline{{\bf x}}_d$ for all $\re^d\setminus \{O\}$;
\item[P4)] the map $\mu_R$ is conformal and reverses orientation (that is det$(\mu'_R(\underline{{\bf x}}_d))<0$);
\item[P5)] the inversion $\mu_R$ maps straight lines into a straight line or sphere. In other words, lines passing through the center of inversion are mapped into themselves; while lines not passing through the center of inversion are mapped into spheres passing through the center.
\end{itemize}

\section{Reflection in hyperplanes}\label{refhyper}
For the main aspects on the reflection in hyperplanes consult Ratcliffe (2006). Let us consider a hyperplane of $\re^d$ given by
\begin{equation}\label{eq:hyperplane}
H(\underline a_d,b)=\{\underline{{\bf x}}_d\in\re ^d:\, \langle\underline a_d,\underline{{\bf x}}_d\rangle=b;\,\underline a_d\in\re^d,b\in\re\}.
\end{equation}

Let $\nu:\re^d\to \re^d$ be the reflection map in the hyperplane $H(\underline a_d,b)$ which is a bijection defined as
\begin{equation}\label{eq:reflmaphyper}
\nu(\underline{{\bf x}}_d):=\underline{{\bf x}}_d+2\frac{ b-\langle\underline a_d,\underline{{\bf x}}_d\rangle}{\langle\underline a_d,\underline a_d\rangle}\underline a_d,
\end{equation}
with

\begin{equation}\label{eq:normrefl2}
||\nu(\underline{{\bf x}}_d)||^2=||\underline{{\bf x}}_d||^2+\frac{ 4b^2-4b\langle\underline a_d,\underline{{\bf x}}_d\rangle}{\langle\underline a_d,\underline a_d\rangle}
\end{equation}
The map $\nu(\underline{{\bf x}}_d)$ is defined as the mirror image of $\underline{{\bf x}}_d$ across $H(\underline a_d,b)$. Furthermore, $\nu$ has the following properties (see, for instance, Ratcliffe, 2006):
\begin{itemize}
\item[Q1)] the points inside the hyperplane are taken to points outside it and vice versa;
\item[Q2)] $\nu(\underline{{\bf x}}_d)=\underline{{\bf x}}_d$ if and only if $\underline{{\bf x}}_d\in H(\underline a_d,b)$;
\item[Q3)] $(\nu\circ \nu)(\underline{{\bf x}}_d)=\underline{{\bf x}}_d$ for all $\underline{{\bf x}}_d\in\re^d$;
\item[Q4)] the map $\nu$ is conformal and reverses orientation (that is det$(\nu'(\underline{{\bf x}}_d))<0$);
\item[Q5)] $\nu$ is an isometry.
\end{itemize}

\end{document}